\newcommand{\R}{\mathbb{R}}
\newcommand{\N}{\mathcal{N}}
\newcommand{\Arg}{\mathop{\mathrm{Arg}}}
\newcommand{\rank}{\mathrm{rank}\,}
\newtheorem{definition}{Definition}
\newtheorem{lemma}{Lemma}
\newtheorem{theorem}{Theorem}
\newtheorem{example}{Example}
\newtheorem{proposition}{Proposition}
\newtheorem{corollary}{Corollary}
\renewcommand{\int}{\mathrm{int}\,}
\newcommand{\diam}{\mathrm{diam}\,}
\def\g {\mathcal G}
\title[Optimization methods on smooth and proximally smooth manifolds]{Error 
bound conditions and convergence of optimization methods on smooth 
and proximally smooth manifolds}
\author{M. V. Balashov, A. A. Tremba}
\address{V. A. Trapeznikov Institute of Control Sciences of Russian Academy of Sciences,
65 Profsoyuznaya street, Moscow 117997, Russia.
balashov73@mail.ru, tremba@ipu.ru}
\begin{document}

\renewcommand{\thefootnote}{ }
\footnote{The work was supported by Russian Science Foundation
(Project 16-11-10015)}
\renewcommand{\thefootnote}{\arabic{footnote}}

\subjclass[2010]{Primary: 90C26, 65K05. Secondary: 46N10, 65K10.}

\maketitle

\begin{abstract}
We analyse the convergence of the gradient projection algorithm,
which is finalized with the Newton method, to a stationary point
    for the problem of nonconvex constrained optimization $\min_{x \in
S} f(x)$ with a proximally smooth set  $S = \{x \in \R^n : g(x) =
0 \}, \; g : \R^n \rightarrow \R^m$ and a smooth function $f$. We
propose new Error bound (EB) conditions for the gradient
projection method which lead to the convergence domain of the
Newton method. We prove that these EB conditions are typical for
a wide class of optimization problems. It is possible to reach
high convergence rate of the algorithm by switching to the Newton
method.
\end{abstract}

Key words: 
Error bound condition, 
gradient projection algorithm, 
Newton's method, 
nonconvex optimization, 
proximal smoothness

\section{Introduction}

Problems of constrained optimization on manifolds are complex
because it is impossible to demand  convexity-like conditions
from a function defined on a manifold. One should use more
flexible, in comparison with convexity, conditions for the
function and for the set. Using these conditions we plan to
analyse the convergence of the gradient projection algorithm (GPA)
and the combined algorithm, including the GPA and the Newton
method (NM). The principle is well known and can be found for
example in \cite{zabotin-chernyaev2001}, see also the
bibliography in \cite{zabotin-chernyaev2001}. Nevertheless there
are no estimates of the rate of convergence. The rate of
convergence was estimated in a particular case for some
proximally smooth sets in \cite{bpt}.

We consider the following finite-dimensional optimization problem
\begin{equation} \label{eq:main}
\min_{\displaystyle x \in S} f(x),
\end{equation}
with a proximally smooth set $S$.
We shall consider $S$ in the form of  the system of  $m$ equations $g_i(x) =
0, \, i = 1,..., m$, or by the vector function $g : \R^n \rightarrow \R^m,\, m < n$. In other words
\begin{equation} \label{eq:S-definition}
S = \{x \in \R^n : g(x) = 0\}.
\end{equation}
Further we also shall assume that the set $S$ is compact and the function $f$ is smooth.

The real Stiefel manifold $S_{n,k}$ is very important example of
the set $S$ (\ref{eq:S-definition}): $S_{n,k}=\{ X\in\R^{n\times
k}\ :\ X^TX=I_k\}$, $k\le n$, $I_k$ is the  $k\times k$ identity
matrix.

Our aim is to find a point of minimum for the function $f$ on the set $S$ or, at least,
a stationary point. We propose to use the GPA as a base method and  to switch to Newton's method
 in a small neighborhood
of a stationary point. The latter will accelerate the convergence rate of the algorithm.

We want to recall some general difficulties for nonconvex problems:

\begin{itemize}
\item[(i)]
the metric projection is not a singleton and not continuous (as a set-valued function),
\item[(ii)]
there could be stationary points, which are not extremums,
\item[(iii)]
the gradient of a differentiable function is not a monotone operator.
\end{itemize}

We consider proximally smooth sets $S$ {\cite{Vial, Clarke}}
because of item (i). The metric projection on such set is a
singleton for any point which is sufficiently close to the set.

The Error bound (EB) condition will be an important technical tool
for the problem under consideration. EB conditions are widely
spreaded in unconstrained optimization
\cite{karimi-etal2016,drusvyatskiy-lewis2016} and recently they
actively penetrate into problems of constrained optimization
\cite{bpt,gao-etal2016,liu-etal2015}. We propose to formulate the
EB condition for the set of stationary points but not for the set
of minimizers, see section
~\ref{sec:nondegeneracy-and-wEB-definition}. This condition
replaces convexity assumptions for the function and for the set
and gives the convergence of the method (and of the GPA in
certain cases). Thus we solve question (ii).

We also consider functions with Lipschitz continuous gradient.
For any function $f$ with Lipschitz continuous gradient $f'$ with
constant $L_{1}$ the function $f(x)+\frac12 L_{1}\| x\|^{2}$ is
convex. This property helps us to solve difficulty (iii).

The proposed method can be used for minimization of a twice
continuously differentiable functions on a smooth and proximally
smooth compact manifolds without edge. We need not the Riemannian
metric, geodesics and retraction with the help of the exponential
mapping \cite{absil-etal2008}. We shall use only the standard metric
projection onto the set  $S$.

The paper has the following structure.

Base results about the GPA and conditions of extremum are
gathered in section ~\ref{sec:basics}: choice of the step-size
and definition of a stationary point. Algorithm of minimization
with switching from the GPA to the modified NM is described in
section~\ref{sec:main-results}. New EB conditions are also
defined in the same section. We give examples of problems with
new EB conditions: minimization of a quadratic function on a
sphere or on the Stiefel manifold. We introduce the notion of
nondegenerate problem  for (\ref{eq:main}). We prove that new EB
conditions are typical, they take place for any nondegenerate
problem. In contrast with the standard Newton method \cite[Ch. 2,
\S 1]{Aubin}, \cite[Ch. 1, \S 1.4]{Bertsekas} and some other
algorithms \cite[Ch. 4]{Bertsekas}, \cite[Ch. 8, \S 2]{Polyak}
which converge locally, the proposed algorithm converges for any
initial point $x_{0}\in S$ and its iterations belong generally to
the set $S$.

For the convenience of readers we have collected  proofs in
Appendix at the end of the article.

\section{Base notations and methods}
\label{sec:basics}

Let $\R^{n}$ be an $n$-dimensional Euclidean space with the inner
product $(x,y)$ for all $x,y\in\R^{n}$ and with the norm $\|
x\|=\sqrt{(x,x)}$ for all $x\in\R^{n}$.

Further we demand twice continuous differentiability of the
functions $f(\cdot), g_i(\cdot)$ ($f, g_i \in \mathcal{C}^2$),
and Lipschitz continuity of the second derivatives  $f''(\cdot),
g''_i(\cdot)$.
We treat the gradient  $f'$ etc. as a column.

Suppose that the function $f$ is Lipschitz with constant  $L_0$,
and its gradient  $f'$ is also Lipschitz with constant $L_1$.

Denote by
\begin{equation} \label{eq:jakobi-matrix}
g'(x) = (g_1'(x)\vdots g_2'(x) \vdots\dots\vdots g_m'(x) )^{T}
\in \R^{m \times n}
\end{equation}
the \it Jacobi matrix \rm for the function $g(x)$. We demand the
standard \it full rank condition \rm $\rank g'(x) = m$ on the
set  $S$.

Assume that the manifold  $S$ is \it compact and without edge. \rm
\footnote{This condition can be weakened and we can demand
compactness for the intersection of some \it lower level set  \rm for $f$ with $S$. Let  $x_0\in S$ be
an initial starting point in context of numerical methods. Then
one can assume that the intersection of the edge of the set $S$ and the lower
level set $\mathcal{L}_f(f(x_0))=\{x\in\R^{n}\, :\, f(x)\le
f(x_{0})\}$ is empty and the intersection of the set $S$ and the lower
level set $\mathcal{L}_f(f(x_0))$ is compact. }.

Denote by  $T_x$ the \it tangent subspace \rm at the point $x \in
S$. It is characterized with the help of the Jacobi matrix
\eqref{eq:jakobi-matrix} by the formula  $T_x = \{v \in \R^n :
g'(x) v =0\}$.

\it The metric projection \rm of a point $x \in \R^n$ onto a set
$Q \subseteq \R^n$ is defined as follows
$$
P_Q(x) = \{y \in Q : \|x - y\| = \rho(x, Q)\},
$$
where $\rho(x, Q) = \inf_{y \in Q} \|x - y\|$ is the \it distance
function\rm.

We shall use vertical stacking of column vectors: $[a, b] = (a^T,
b^T)^T$. The same notations will be applied for matrices $a$, $b$
of particular sizes. For a number $t>0$ we denote by $\lceil
t\rceil$ the minimal natural number with $t\le \lceil t\rceil$.

The \it metric projecting operator \rm onto the tangent subspace
$T_x$ is given by the matrix  \cite[Ch. 7, \S 2, Formula
(7)]{Polyak}
\[
P_{T_x} = I_n - g'^T(x)(g'(x) g'^T(x))^{-1}g'(x) = I_n - g'^T(g' g'^T)^{-1}g',
\]
the metric projection of a point  $y \in \R^n$ onto a subspace
$T_x$ is denoted by $P_{T_x} y$. The metric projecting operator $P_{T_{x}}$ is defined for all
$x\in S$ by the full rank condition for matrix
\eqref{eq:jakobi-matrix}.
We shall omit the dependence of an expression on
argument if this dependence is obvious from a context.

For a set  $S \subset \R^n$ and a number $R > 0$ define the set
$$
U_S(R) = \{x \in \R^n : 0 < \rho(x, S) < R \}
$$
that is a layer (or "tube") around the set $S$.

An important requirement for the set $S$ in our work is its
proximal smoothness (also known as prox-regularity or weak
convexity), that is characterized by constant of proximal
smoothness $R>0$.

\begin{definition}[{\cite{Vial, Clarke}}]
\label{def:proximally-smooth} A closed set $S\subset\R^{n}$ is called
\emph{proximally smooth} with constant $R$ if the distance
function  $\rho(x, S)$ is continuously differentiable on $U_S(R)$.
\end{definition}

Existence, uniqueness of $P_{S}x$ for all $x\in U_S(R)$ and continuity%
\footnote{In a finite dimensional space continuity of the mapping
$U_S(R)\ni x\to P_{S}x$ can be omitted. This follows from
uniqueness and upper semicontinuity of
the metric projection \cite[Ch. 3, \S 1, Proposition 23]{Aubin}.}
of the mapping $U_S(R)\ni x\to P_{S}x$ in a real Hilbert space are
equivalent conditions for proximal smoothness of the set $S$ with
constant $R$. In other words the set $S$ has the Chebyshev layer of size $R$.

For a point  $x$ of a proximally smooth set  $S$ the  \emph{cone
of proximal normals} (or simply --- \emph{normal cone}) is
defined as
\[
\N(S, x) = \big\{ p \in \R^{n} : \exists t > 0, P_S(x + t p) =
\{x\} \big\}.
\]
 This cone coincides with any other cone to the proximally smooth set
$S$ at the point $x\in S$
 (in particular with cones of Clarke and Bouligand) \cite{poliquin-etal2000,Thibault}.
For our situation, when $S$ is given by the system
\eqref{eq:S-definition}, the normal cone coincides with the
orthogonal subspace to the tangent subspace $T_{x}$, i.e.  $\N(S,
x) = \{ g'(x)^T w : w \in \R^{m}\}$.

It is obvious that the Euclidean sphere of radius $R$ is
proximally smooth with constant $R$.

\begin{example}[{\cite{Vial}}]
Suppose that $g : \R^n \rightarrow \R^1$,
 $g$ is a Lipschitz function with constant $L_g$ and
there exists $\ell > 0$ such that for any $x \in S$ we have
$\|g'(x)\| \geq \ell$. Then the set $S = \{x \in \R^n : g(x) =
0\}$ is proximally smooth with constant $R = \ell / L_g$.
\end{example}

Sometimes one can calculate constant of proximal smoothness using the supporting principle
for proximally smooth sets, see
\cite{bookVol2}.

\begin{proposition}
\label{prop:stiefel-is-proximally-smooth} The Stiefel manifold
$S_{n,k}=\{ X\in\R^{n\times k}\ :\ X^TX=I_k\}$ of any dimensions
is proximally smooth with constant $R=1$. This constant is the
largest possible. \rm See the proof in
section~\ref{sec:projection-on-simple-sets} of the Appendix.
\end{proposition}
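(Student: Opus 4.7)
The plan is to make the projection onto $S_{n,k}$ completely explicit via the polar decomposition, then read off both the proximal smoothness radius and its sharpness from an eigenvalue bound for symmetric matrices of unit Frobenius norm.

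First I would identify the normal cone at $X\in S_{n,k}$. Treating the constraint $g(X) = X^{T}X - I_{k} = 0$ as taking values in the space $\mathrm{Sym}_{k}$ of symmetric $k\times k$ matrices (which has the correct dimension $m = k(k+1)/2$), the tangent space is $T_{X} = \{V\in\R^{n\times k} : V^{T}X + X^{T}V = 0\}$, and a short computation with the Frobenius inner product (using that the trace of a product of a symmetric and a skew-symmetric matrix vanishes) identifies the orthogonal complement as $\N(S_{n,k},X) = \{XS : S\in\mathrm{Sym}_{k}\}$. The Frobenius norm of such a normal is $\|XS\|_F = \|S\|_F$ since $X^{T}X = I_{k}$.

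Next I would carry out the key calculation. Fix $X\in S_{n,k}$, pick a unit normal $N = XS$ with $\|S\|_F = 1$, and let $t\in(0,1)$. Then
\[
Y := X + tN = X(I_{k} + tS).
\]
Diagonalize $S = Q\Lambda Q^{T}$ with $Q$ orthogonal and $\Lambda = \mathrm{diag}(\lambda_{1},\dots,\lambda_{k})$. The constraint $\|S\|_F = 1$ forces $\sum\lambda_{i}^{2} = 1$, hence $|\lambda_{i}|\le 1$ for every $i$, and therefore $1 + t\lambda_{i} > 0$ for every $t<1$. Writing $Y = (XQ)\,\mathrm{diag}(1+t\lambda_{i})\,Q^{T}$ with $XQ\in S_{n,k}$ exhibits a genuine SVD of $Y$ with strictly positive singular values. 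The projection onto $S_{n,k}$ of any such full-rank matrix is given by the polar factor $UV^{T}$ (the classical orthogonal Procrustes / polar decomposition result), so
\[
P_{S_{n,k}}(Y) = (XQ)Q^{T} = X,
\]
and the projection is a singleton. Since $X$ and the unit normal were arbitrary, this yields proximal smoothness with $R = 1$.

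For sharpness, I would exhibit a single matrix at distance exactly $1$ with non-unique projection. Taking any $X\in S_{n,k}$ and setting $Y_{0} = X(I_{k} - e_{k}e_{k}^{T})$, a direct minimization of $\|Z - Y_{0}\|_F^{2}$ over $Z = [z_{1},\dots,z_{k}]\in S_{n,k}$ splits column-wise: the first $k-1$ terms are minimized by $z_{i} = Xe_{i}$, and the last term $\|z_{k}\|^{2} = 1$ is forced. Hence $\rho(Y_{0},S_{n,k}) = 1$, while any unit vector $z_{k}$ orthogonal to $Xe_{1},\dots,Xe_{k-1}$ gives a minimizer (two choices when $n=k$, a whole sphere when $n>k$). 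Any $R > 1$ would put $Y_{0}\in U_{S_{n,k}}(R)$ with multi-valued projection, contradicting proximal smoothness.

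The main obstacle is the projection formula in the second paragraph: one needs the fact that the projection of a full-rank $n\times k$ matrix onto $S_{n,k}$ is its polar factor, and one needs the observation that $\|S\|_F = 1$ is exactly the threshold that allows $1 + t\lambda_i$ to reach zero precisely at $t = 1$. Once those two facts are in place, proximal smoothness and its sharpness both fall out of the same SVD computation.
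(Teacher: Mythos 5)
Your proof is correct, and it reaches the same two conclusions by the same overall strategy as the paper (an SVD/polar-factor description of the metric projection for the radius, plus a matrix with a zeroed-out column for sharpness), but it is genuinely more self-contained in the crucial step. The paper simply cites Proposition~7 of Absil--Malick, which asserts that $P_{S_{n,k}}(Y)=UI_{k,n}V^{T}$ is single-valued whenever $\rho(Y,S_{n,k})<1$, and then combines uniqueness with upper semicontinuity to get continuity of the projection. You instead \emph{derive} the threshold $1$: you identify the normal space $\N(S_{n,k},X)=\{XS : S\in\mathrm{Sym}_k\}$, write any point of the tube as $Y=X(I_k+tS)$ with $\|S\|_F=1$ and $t=\rho(Y,S_{n,k})<1$, and observe that $|\lambda_i|\le\|S\|_F=1$ forces all singular values $1+t\lambda_i$ of $Y$ to be positive, so the classical Procrustes/polar-factor result gives $P_{S_{n,k}}(Y)=\{X\}$. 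This buys an explanation of \emph{why} the radius is exactly $1$ (it is the smallest $t$ at which $I_k+tS$ can become singular for a unit-Frobenius-norm symmetric $S$), at the cost of having to justify the normal-cone identification and the polar-factor projection formula yourself rather than quoting them; the one fact you still take as known (the projection of a full-rank matrix is its polar factor) is exactly the content of the result the paper cites. Your sharpness example $Y_0=X(I_k-e_ke_k^T)$ is the paper's example $X_0=[0,e_2,\dots,e_k]$ in slightly greater generality, and your column-wise minimization argument correctly shows $\rho(Y_0,S_{n,k})=1$ with a non-unique projection, which rules out any $R>1$.
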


\subsection{Stationary conditions}

For the problem \eqref{eq:main} with a proximally smooth set  $S$
points of minimum are characterized by the next necessary
condition: the anti-gradient $-f'(x)$ at such point $x\in S$
belongs to the normal cone $\N(S,x)$ \cite[Appendix 5.1]{bpt}. We
shall call such point \emph{stationary} and denote their set by
\[
\Omega = \{x \in S : -f'(x) \in \N(S, x)\}.
\]

If the set is given by the system (\ref{eq:S-definition}), the
stationary condition is equivalent to the equality $P_{T_x} f'(x)
= 0$ (or $\|P_{T_x} f'(x) \| = 0$).

Consider the Lagrange function with the Lagrange multiplier
$\lambda \in \R^m$:
$$
f(x) + \sum_{i=1}^m \lambda_i g_i(x).
$$
The stationary condition can be written with the help of
derivative of the Lagrange function with respect to the extended
variable $z = [x, \lambda] \in \R^{n + m}$, i.e. in the form $F(z)
= 0$, where
\begin{equation} \label{eq:F-definition}
F(z) = F(x, \lambda) = \begin{bmatrix}
f'(x) + g'(x)^T \lambda \\
g(x)
\end{bmatrix}.
\end{equation}
The Hessian matrix of the Lagrange function coincides with
$F'(z)$ and has the form
\begin{equation} \label{eq:F'-definition}
F'(z) = \begin{bmatrix}
f''(x) + (g'(x)^T)'_x \lambda & g'(x)^T \\
g'(x) & 0
\end{bmatrix}
=
\begin{bmatrix}
f''(x) + \sum_{i=1}^m \lambda_i g''_i(x) & g'(x)^T \\
g'(x) & 0
\end{bmatrix}
.
\end{equation}

Note some relationship between the derivative of the Lagrange function
at the point $[x,\lambda]$ and the metric projection
$P_{T_{x}}f'(x)$ for a point $x\in S$. For any $x \in S$ define
$\lambda_x$ by the formula
\begin{equation} \label{eq:lambda_x-definition}
\lambda_x = \arg \min_\lambda \|F(x, \lambda)\|
= \arg \min_\lambda \|f'(x) + g'(x)^T \lambda\|
= -(g'g'^T)^{-1}g'f'.
\end{equation}
We get the equality $f' + g'^T \lambda_x = (I - g'^T(g'
g'^T)^{-1})f' = P_{T_x}f'$. Note also that $\|F(x, \lambda_x)\| =
\|P_{T_x}f'\|$. The variable $\lambda_{x}$ depends on $x$.
We shall
use the notation
\begin{equation} \nonumber 
F_x(x) \doteq F(x, \lambda_x)=P_{T_x}f'(x)
\end{equation}
to distinguish the functions $F(z)$ and $F(x, \lambda_x)$.

Further we shall use the notation $F'(x, \lambda_x)$ that means
\begin{equation} \label{eq:F'_x-definition}
F'(x, \lambda_x) \doteq F'([x, \lambda_x]) \equiv F'(z)\big|_{z = [x, \lambda_x]}.
\end{equation}
Notice that the last expression is not a derivative of the
function $F_x(x)$ on $x$. By the Lipschitz condition for $g''$ and
$f''$ the function $F'(x, \lambda_x)$ is also Lipschitz
in a compact neighborhood of $S$ with some
constant $L_{1,Fx}$ .

\subsection{The gradient projection algorithm}

Consider some results about convergence of the GPA
\begin{equation} \label{eq:proj-grad}
x_{k+1} = P_S(x - \gamma f'(x_k)).
\end{equation}
for a proximally smooth set  $S$ and a function  $f(\cdot)$ with
 Lipschitz continuous gradient \cite{bpt}.

The idea of applying the GPA for nonconvex problem is the next one.
Let $x_k \in S$. Then we can choose the step-size
$\gamma$ with the property  $x_k - \gamma f'(x_k)\in U_S(R)\cup S$.
Projection $P_S(x_k - \gamma f'(x_k))$ exists and it is unique by the definition of proximally smooth set.
Next, we can adopt the step-size in such way that the sequence
$f(x_k)$ will be monotonically decreasing.

\begin{theorem}[{\cite[Theorem 1]{bpt}}]
\label{thm:projected-gradient-convergence} Suppose that  $S$
is a proximally smooth set with constant $R$ and $x_0 \in S$
is a starting point. Assume that a function  $f : \R^n \to \R$ is
Lipschitz with constant $L_0$, and its gradient is also Lipschitz
with constant  $L_1$. Then for any fixed step-size $0 < \gamma <
\min\{\frac{1}{L_1}, \frac{R}{L_0}\}$ the GPA
\eqref{eq:proj-grad} converges to the set of stationary points
$\Omega$, i.e. $\lim\limits_{k\to\infty}\rho (x_{k},\Omega)=0$.
Moreover,
$$
f(x_{k+1})
\leq f(x_k) - \frac{1}{2}\Big(\frac{1}{\gamma} - L_1\Big) \|x_{k+1} - x_k\|^2.
$$
\end{theorem}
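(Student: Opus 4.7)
The plan is to establish three things in order: (a) the iteration (\ref{eq:proj-grad}) is well defined for every $k$; (b) a descent estimate on $\{f(x_k)\}$ that already yields the quantitative bound in the statement; (c) any cluster point of $\{x_k\}$ lies in $\Omega$, which is then upgraded to $\rho(x_k,\Omega)\to 0$ via compactness.

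For (a), proximal smoothness supplies a single-valued, continuous $P_S$ on the tube $U_S(R)\cup S$. Since $x_k\in S$ and $\|f'(x_k)\|\le L_0$, the shifted point $y_k:=x_k-\gamma f'(x_k)$ lies at distance at most $\gamma L_0<R$ from $S$, so $x_{k+1}=P_Sy_k$ is uniquely defined and belongs to $S$; an induction keeps the whole orbit in $S$.

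For (b), I would use directly the defining property of the projection: since $x_{k+1}$ is a nearest point in $S$ to $y_k$ and $x_k\in S$,
\[
\|x_{k+1}-y_k\|^2 \le \|x_k-y_k\|^2 = \gamma^2\|f'(x_k)\|^2.
\]
Expanding the left-hand side and cancelling $\gamma^2\|f'(x_k)\|^2$ gives
\[
(f'(x_k),x_{k+1}-x_k)\le -\tfrac{1}{2\gamma}\|x_{k+1}-x_k\|^2,
\]
and combining with the Lipschitz-gradient descent lemma $f(x_{k+1})\le f(x_k)+(f'(x_k),x_{k+1}-x_k)+\tfrac{L_1}{2}\|x_{k+1}-x_k\|^2$ is exactly the claimed estimate.

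For (c), compactness of $S$ and continuity of $f$ make $\{f(x_k)\}$ monotone and bounded, hence convergent; the descent estimate with $\gamma<1/L_1$ forces $\sum_k\|x_{k+1}-x_k\|^2<\infty$ and in particular $x_{k+1}-x_k\to 0$. For any cluster point $x^\ast$ of $\{x_k\}$, continuity of $P_S$ on the tube lets us pass to the limit in $x_{k+1}=P_S(y_k)$ to obtain $x^\ast=P_S(x^\ast-\gamma f'(x^\ast))$, hence $-\gamma f'(x^\ast)\in\N(S,x^\ast)$ and $x^\ast\in\Omega$. A standard contradiction argument using compactness of $S$ then upgrades this cluster-point statement to $\rho(x_k,\Omega)\to 0$.

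The main conceptual point, which is easy to miss, is that proximal smoothness is invoked only to ensure single-valuedness and continuity of $P_S$ on the tube; the descent inequality itself is a one-line consequence of the projection being a minimum-distance point in $S$, and the step-size restriction $\gamma<R/L_0$ serves precisely to keep every $y_k$ inside that tube of uniqueness.
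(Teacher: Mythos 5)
Your proof is correct and follows essentially the same route as the paper's: well-posedness of the projection from $\rho(x_k-\gamma f'(x_k),S)\le\gamma L_0<R$, a sufficient-decrease inequality that is algebraically equivalent to the paper's comparison $\psi_k(x_{k+1})\le\psi_k(x_k)$ combined with the descent lemma (the paper explicitly notes $P_S(x_k-\gamma f'(x_k))=\arg\min_{x\in S}\psi_k(x)$, which is exactly your minimum-distance inequality), then vanishing step lengths and a compactness contradiction for $\rho(x_k,\Omega)\to 0$. The only minor variation is in showing that cluster points are stationary: you pass to the limit in $x_{k+1}=P_S(x_k-\gamma f'(x_k))$ using continuity of the metric projection on the tube, whereas the paper passes to the limit in the inclusion $f'(x_k)+\frac{1}{\gamma}(x_{k+1}-x_k)\in-\N(S,x_{k+1})$ using upper semicontinuity of the normal cone; both are valid consequences of proximal smoothness.
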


Note that we can estimate the number of steps which is necessary
for finding a stationary point with any a priori precision.

\begin{corollary}
\label{cor:projected-gradient-number-of-steps}
Suppose that under conditions of Theorem~\ref{thm:projected-gradient-convergence}
we know the value $\Delta f = f(x_0) - \inf_{x \in S} f(x)$,
and the set $S$ is given by the system \eqref{eq:S-definition}.
Then for any $\varepsilon > 0$ we can find a natural number
$i: 0 \leq i \leq
\left\lceil \frac{2 \Delta f (1 + \gamma L_1)^2}%
{\varepsilon^2 \gamma (1 - \gamma L_1)}%
\right\rceil
$ with
$\|P_{T_{x_i}} f'(x_i)\| \leq \varepsilon$.
\end{corollary}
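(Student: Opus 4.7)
The plan is to combine the telescoped descent inequality from Theorem~\ref{thm:projected-gradient-convergence} with a one-step bound controlling $\|P_{T_{x_i}}f'(x_i)\|$ by the displacement $\|x_i - x_{i-1}\|$. Summing the inequality
$$f(x_{k+1}) \le f(x_k) - \tfrac{1}{2}\bigl(\gamma^{-1} - L_1\bigr)\|x_{k+1} - x_k\|^2$$
over $k = 0,\dots,N-1$ and bounding the telescoped drop by $\Delta f$ gives
$$\sum_{k=0}^{N-1}\|x_{k+1} - x_k\|^2 \le \frac{2\gamma\,\Delta f}{1 - \gamma L_1},$$
so the displacements cannot all remain large simultaneously.

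The core lemma I would isolate is
$$\|P_{T_{x_{k+1}}}f'(x_{k+1})\| \le \frac{1+\gamma L_1}{\gamma}\,\|x_{k+1}-x_k\|.$$
Set $y_k = x_k - \gamma f'(x_k)$, so that $x_{k+1} = P_S y_k$ and, by proximal smoothness, $y_k - x_{k+1} \in \N(S, x_{k+1})$ is orthogonal to $T_{x_{k+1}}$. Splitting $-\gamma f'(x_k) = (y_k - x_{k+1}) + (x_{k+1} - x_k)$ and projecting onto $T_{x_{k+1}}$ kills the normal piece, leaving
$$-\gamma\, P_{T_{x_{k+1}}}f'(x_k) = P_{T_{x_{k+1}}}(x_{k+1}-x_k).$$
Writing $f'(x_{k+1}) = f'(x_k) + (f'(x_{k+1}) - f'(x_k))$, invoking the Lipschitz estimate $\|f'(x_{k+1}) - f'(x_k)\| \le L_1\|x_{k+1}-x_k\|$, and using that $P_{T_{x_{k+1}}}$ is a contraction yields the lemma.

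To conclude, suppose for contradiction that $\|P_{T_{x_i}}f'(x_i)\| > \varepsilon$ for every $i = 1,\dots,N$ with $N = \lceil \frac{2\Delta f(1+\gamma L_1)^2}{\varepsilon^2\gamma(1-\gamma L_1)}\rceil$. The lemma forces $\|x_i - x_{i-1}\| > \gamma\varepsilon/(1+\gamma L_1)$ for each such $i$; plugging into the telescoped bound produces
$$\Delta f > \frac{N\gamma\varepsilon^2(1-\gamma L_1)}{2(1+\gamma L_1)^2},$$
which is precisely the inequality the choice of $N$ rules out. I expect the main obstacle to be the one-step lemma, and in particular the decision to evaluate the normal cone at $x_{k+1}$ rather than at $x_k$ is what keeps the constant $(1+\gamma L_1)/\gamma$ clean and free of any dependence on the proximal radius $R$; the remainder of the argument is bookkeeping.
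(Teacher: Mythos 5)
Your proposal is correct and follows essentially the same route as the paper: the telescoped descent bound from Theorem~\ref{thm:projected-gradient-convergence} combined with the one-step estimate $\|P_{T_{x_{k+1}}}f'(x_{k+1})\| \le (\tfrac{1}{\gamma}+L_1)\|x_{k+1}-x_k\|$, which is exactly the paper's inequality \eqref{eq:relation-betweed-rho-and-delta-x} derived from the same normal-cone inclusion at $x_{k+1}$. The only cosmetic differences are that you sum the squared displacements and argue by contradiction, while the paper bounds the minimal displacement directly.
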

The proof of the Theorem and Corollary can be found in section  \ref{sec:projected-gradient-convergence-proof}
of the Appendix.

Finding the metric projection of a point onto the set is an
important part of the gradient projection algorithm. If the set
has simple structure then the metric projection can be easily
found, e.f. for Euclidean sphere or the Stiefel manifold, see
Proposition \ref{prop:stiefel-is-proximally-smooth} in
section~\ref{sec:projection-on-simple-sets} of the Appendix.

In paper \cite[algorithm GPA2]{bpt}   we consider an algorithm
for finding some easily computing quasi-projection instead of the
 metric projection for the case of one equation $g : \R^n
\rightarrow \R$.
\subsection{The Newton method}

Now we formulate sufficient conditions for convergence of the Newton
method for the equation $F(z) = 0$.
We shall assume for simplicity that the function
$F(z)$ is continuously differentiable everywhere.
\begin{proposition}[{\cite[Theorem X.4.1]{kolmogorov-fomin2004}, see also \cite[Ch. 2, \S 1]{Aubin}, \cite{Bertsekas}}]
\label{prop:mod-newton-convergence} Suppose that the derivative
$F'(z)$ is Lipschitz continuous with constant $L_{1,F}$, the
matrix $F'(z_0)$ is invertible at the point $z_0 = [\widehat{x}_0,
\lambda_0]$ and the condition
\begin{equation} \label{eq:newton-convergence-condition}
L_{1,F} \, \|F'(\widehat{x}_0, \lambda_0)^{-1}F(\widehat{x}_0,
\lambda_0)\| \cdot \|F'(\widehat{x}_0, \lambda_0)^{-1}\| <
\frac{1}{4}
\end{equation}
holds. Then the modified Newton method  \eqref{eq:modified-newton}
converges to a solution $z=z^*=[x_*,\lambda_*]$ of the equation
$F(z) = 0$. Moreover $z^*$ is a unique solution in the ball with
centerpoint $z_{0}=[\widehat{x}_0,\lambda_{0}]$ and radius
$r=Kt_{0}$. Here we have $K=\| F'(z_0)^{-1}F(z_{0})\|$, and
$t_{0}\in (0,2]$ is the smaller root of the equation
$ht^{2}-t+1=0$, where $h=L_{1,F}K\| F'(z_0)^{-1}\|$.

Besides, a linear rate of convergence takes place for iterations of the modified Newton method:
$$
\|\widehat{x}_k - x_*\| \leq  2^{1-k} \|F'(\widehat{x}_0,
\lambda_0)^{-1}F(\widehat{x}_0, \lambda_0)\| \leq  2^{1-k}
\|F'(\widehat{x}_0, \lambda_0)^{-1}\| \cdot \|F(\widehat{x}_0,
\lambda_0)\|
$$
and $[\widehat{x}_k,\lambda_k]\in B_r(z_0)$. Note that it's
sufficient to require Lipschitz continuity of $F'(z)$ on the ball
$B_{2K}(z_0)$.
\end{proposition}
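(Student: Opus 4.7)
The plan is to recast the modified Newton iteration as a fixed-point iteration $z_{k+1}=G(z_k)$ for $G(z) := z - F'(z_0)^{-1}F(z)$. Because $F'(z_0)$ is invertible, the fixed points of $G$ coincide exactly with the zeros of $F$, so the whole proof reduces to verifying the two Banach fixed-point hypotheses on the closed ball $\overline{B_r(z_0)}$ with $r=Kt_0$: namely that $G$ maps this ball into itself and contracts there with ratio at most $1/2$.

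First I would estimate the Lipschitz constant of $G$ on $\overline{B_r(z_0)}$. Differentiating gives $G'(z)=F'(z_0)^{-1}(F'(z_0)-F'(z))$, so the Lipschitz assumption on $F'$ yields $\|G'(z)\|\le L_{1,F}\|F'(z_0)^{-1}\|\cdot\|z-z_0\|$, whence the Lipschitz constant of $G$ on the ball is at most $L_{1,F}\|F'(z_0)^{-1}\|r = ht_0$. The hypothesis $h<1/4$ places the smaller root $t_0$ of $ht^2-t+1=0$ in $[1,2)$, so the contraction factor $q:=ht_0$ satisfies $q<1/2$.

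Second, invariance of $\overline{B_r(z_0)}$ under $G$ follows from
$$\|G(z)-z_0\| \le \|G(z_0)-z_0\| + q\|z-z_0\| \le K + ht_0\cdot Kt_0 = K(1+ht_0^2) = Kt_0 = r,$$
the last equality using the defining relation $ht_0^2=t_0-1$. Banach's theorem then produces a unique fixed point $z_*=[x_*,\lambda_*]\in \overline{B_r(z_0)}$ with $F(z_*)=0$, and the geometric contraction gives
$$\|z_k-z_*\|\le q^k\|z_0-z_*\|\le q^k r \le (1/2)^k\cdot 2K = 2^{1-k}K.$$
Projecting onto the $x$-component yields the first stated rate, and the inequality $K\le \|F'(z_0)^{-1}\|\cdot\|F(z_0)\|$ yields the second; the final remark that Lipschitzness is only needed on $B_{2K}(z_0)$ follows since $r=Kt_0\le 2K$.

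The crux is algebraic rather than analytic: one must notice that the scalar Kantorovich equation $ht^2-t+1=0$ simultaneously delivers the right radius (via $1+ht_0^2=t_0$, guaranteeing invariance) and a contraction factor below $1/2$ (via $t_0<2$ whenever $h<1/4$). Once this double role of $t_0$ is spotted, the rest is a routine application of the Banach fixed-point theorem; no Newton-specific majorant machinery is required.
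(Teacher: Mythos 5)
Your argument is correct and complete: the contraction-mapping reformulation $G(z)=z-F'(z_0)^{-1}F(z)$, the derivative bound $\|G'(z)\|\le L_{1,F}\|F'(z_0)^{-1}\|\,\|z-z_0\|\le ht_0<\tfrac12$ on $\overline{B_r(z_0)}$, and the invariance identity $1+ht_0^2=t_0$ together yield existence, uniqueness in $B_r(z_0)$, the rate $2^{1-k}K$, and the remark about $B_{2K}(z_0)$. The paper does not prove this proposition itself but cites it from Kolmogorov--Fomin, and your Banach fixed-point argument is precisely the classical proof given there, so no further comparison is needed.
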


\section{Main results}
\label{sec:main-results}

\subsection{Combined algorithm: the GPA and the NM}

We shall assume that we have information about Lipschitz constant  $L_0$ of the function
$f$ and Lipschitz constant  $L_1$ of the gradient $f'$.
Suppose also that we know constant of proximal smoothness  $R$
for the set $S$.

The next Algorithm depends on some real positive constant $C > 0$,
we shall specify its value below.

\begin{framed}
\textbf{Combined algorithm: GPA + NM}

\begin{enumerate}
\item[]
\textbf{Starting conditions and parameters:} Given constant $C>0$.\\
 Choose arbitrarily $x_0 \in S = \{x \in \R^n
: g(x) = 0\}$. Put $k = 0$ and take $0 < \gamma <
\min\{\frac{1}{L_1},
\frac{R}{L_0}\}$.

\item[Step 1]
\textbf{Gradient projection algorithm, GPA:} \\
Do \eqref{eq:proj-grad}, increasing $k$:
\[
x_{k+1} = P_S(x_k - \gamma f'(x_k)).
\]
In the case

\begin{equation} \label{eq:switching-condition}
\|P_{T_{x_k}}f'(x_k)\| < C,
\end{equation}
(or in the case
\eqref{eq:switching-condition-on-delta-x})
go to  Step~2.

\item[Step 2]
\textbf{Preparation for the NM:} \\
Define the initial point $\widehat{x}_{0} = x_{k}$,
$\lambda_0
= \lambda_{\widehat{x}_0}
= -(g'(\widehat{x}_0)g'(\widehat{x}_0)^T)^{-1}g'(\widehat{x}_0)f'(\widehat{x}_0)$,
put $k = 0$.

\item[Step 3]
\textbf{Modified Newton method, NM:} \\
Do steps of the \emph{modified} NM for the equation $F(z) =
F(\widehat{x}, \lambda) = 0$, increasing $k$:
\begin{equation} \label{eq:modified-newton}
\begin{array}{l}
\begin{bmatrix}
\widehat{x}_{k+1} \\
\lambda_{k+1}
\end{bmatrix}
=
\begin{bmatrix}
\widehat{x}_{k} \\
\lambda_{k}
\end{bmatrix}
- F'\Big(\begin{bmatrix}
\widehat{x}_{0} \\
\lambda_{0}
\end{bmatrix}
\Big)^{-1}
F\Big(\begin{bmatrix}
\widehat{x}_{k} \\
\lambda_{k}
\end{bmatrix}
\Big) = \\
\!\!\!\!\!\!\!\!\!\!\!\!\!\!\!
=
\begin{bmatrix}
\widehat{x}_{k} \\
\lambda_{k}
\end{bmatrix}
-
\begin{bmatrix}
f''(\widehat{x}_0) + \sum_{i=1}^m (\lambda_k)_i g_i''(\widehat{x}_0) &
g'(\widehat{x}_0)^T\\
g'(\widehat{x}_0) & 0
\end{bmatrix}^{-1}
\begin{bmatrix}
f'(\widehat{x}_k) + g'(\widehat{x}_k)^T \lambda_k \\
g(\widehat{x}_k) 
\end{bmatrix}.
\end{array}
\end{equation}

Algorithm should be stopped with the help of some stop criteria,
e.g. after a given number of steps and so on, see further.
\end{enumerate}

\end{framed}
The value $C$ is a nontrivial parameter of the Algorithm. Its
calculation requires information about some additional constants,
see section~\ref{sec:main-results}.

We want to pay attention on some peculiarities of the Algorithm.

\begin{enumerate}
\item
Firstly, at the initial step (GPA) the sequence
 $x_k$ belongs to the manifold $S$.
Condition $\gamma < R/L_0$ guarantees the inclusion $x_k - \gamma
f'(x_k) \in S \cup U_S(R)$ and uniqueness of the metric
projection. Another condition $\gamma < 1/L_1$ guarantees that
the sequence $f(x_k)$ is monotonically decreasing, see the proof
of Theorem~\ref{thm:projected-gradient-convergence} in
section~\ref{sec:projected-gradient-convergence-proof} of the
Appendix. The maximum number of steps in this phase is also
explicitly estimated there.

\item
We can use a simpler condition for switching the GPA phase to the
NM phase instead of condition~\eqref{eq:switching-condition},
namely
\begin{equation} \label{eq:switching-condition-on-delta-x}
\|x_k - x_{k-1}\| \leq \frac{\gamma}{1 + \gamma L_1} C, \; k \geq
1.
\end{equation}
This condition needs computing of a simple value $\|
x_{k}-x_{k-1}\|$ instead of $P_{T_{x_k}}f'(x_k)$. The
admissibility of this condition follows from the estimate $\|x_k
- x_{k-1}\|  (\frac{1}{\gamma} + L_1)\ge
\|P_{T_{x_k}}(f'(x_k))\|$, inequality
\eqref{eq:relation-betweed-rho-and-delta-x} and the limit
$\|x_{k} - x_{k-1}\| \rightarrow_{k \to \infty} 0$ (see the proof
of Corollary~\ref{cor:projected-gradient-number-of-steps} in
section~\ref{sec:projected-gradient-convergence-proof} of the
Appendix).

\item
We use the
\emph{modified NM} in the second phase.
It needs   \emph{a unique} computing of the inverse matrix
 $F'(\widehat{x}_0, \lambda_0)^{-1}$. The points
$\widehat{x}_k \in \R^n$ do not necessarily belong to the set  $S$
and $\lambda_k$ is an independent variable.
\end{enumerate}

The main criterion for stopping the Algorithm (the NM phase) is the
inequality
$$
\|P_{T_{\widehat{x}_k}} f'(\widehat{x}_k)\| \leq \varepsilon,
$$
or estimate for the distance to the set of stationary points
$$
\rho(\widehat{x}_k, \Omega) \leq \varepsilon,
$$
for some $\varepsilon > 0$. The last is achieved after given
number of steps for the NM, this number can be determined if
appropriate constants are known.

The fulfillment of switching  condition
\eqref{eq:switching-condition} or
\eqref{eq:switching-condition-on-delta-x} for any $C > 0$ is
guaranteed by Theorem~\ref{thm:projected-gradient-convergence}
about convergence of the GPA on a proximally smooth set and by
Corollary~\ref{cor:projected-gradient-number-of-steps}.

Next consider the conditions that provide the convergence of the
NM at the second phase of the Algorithm.

\subsection{Nondegenerate problems and error bound conditions}
\label{sec:nondegeneracy-and-wEB-definition}

\begin{definition} \label{def:nondegenerate-definition}
We shall call the problem \eqref{eq:main} with $f, g \in
\mathcal{C}^2$ \emph{\bf nondegenerate}, if for all stationary
points $x_*\in\Omega$ the matrix $F'(x_*,\lambda_{x_*})$ is
invertible. Norms of all inverse matrices are bounded from above
by a value  $\sigma_0>0$:
\begin{equation} \label{eq:sigma-0-definition}
\|F'(x_*, \lambda_{x_*})^{-1}\| \leq \sigma_0, \;\; \forall x_* \in \Omega.
\end{equation}
\end{definition}
This definition allows to use the NM in neighborhoods of
stationary points, see
Lemma~\ref{lem:sigma-around-stationary} below%
\footnote{ We can treat
$\sigma_0$ as \emph{minimal}
singular value  of matrices  $F'(z)\big|_{z = [x_*,
\lambda_{x_*}]}, \; x_* \in \Omega$
(it coincides with minimal by absolute value eigenvalue,
$\Lambda(\cdot)$ denotes the spectrum of a matrix):
$\sigma_0
\geq \max_{x_* \in \Omega} \|F'(x_*, \lambda_{x_*})^{-1}\|
= \big(\min_{x_* \in \Omega} \sigma_{\min} (F'(x_*, \lambda_{x_*}))\big)^{-1}
= \big(\min_{x_* \in \Omega, \lambda \in \Lambda (F(x_*,
\lambda_{x_*}))} |\lambda|\big)^{-1}. $ }.

\begin{lemma} \label{lem:finite-number-of-stationary-points-in-nondegenerate}
If the set $S$ is compact and the problem \eqref{eq:main} is nondegenerate then
the number of stationary points is finite, $\Omega =
\{x_{j}\}_{j=1}^{J}$.
\end{lemma}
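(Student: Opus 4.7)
The plan is to show that every stationary point is isolated in $\Omega$, and then invoke compactness of $\Omega$ to conclude finiteness.

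First I would observe that $\Omega$ is a closed (hence compact) subset of $S$. Indeed, on $S$ the map $x \mapsto \lambda_x = -(g'g'^T)^{-1}g'f'$ is continuous by the full-rank assumption on $g'$ together with $g,f \in \mathcal{C}^2$, so $F_x(x) = F(x,\lambda_x) = P_{T_x}f'(x)$ is continuous in $x \in S$. Since $\Omega = F_x^{-1}(\{0\})\cap S$, it is closed in $S$, and compact by compactness of $S$.

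Next I would argue that each $x_* \in \Omega$ is isolated in $\Omega$. The key is to apply the inverse function theorem to $F : \R^{n+m} \to \R^{n+m}$ (as defined in \eqref{eq:F-definition}) at the point $z_* = [x_*, \lambda_{x_*}]$. By definition of nondegeneracy, $F'(x_*,\lambda_{x_*})$ is invertible, so there is an open ball $B_{\delta}(z_*) \subset \R^{n+m}$ on which $F$ is a diffeomorphism onto its image; in particular $z_*$ is the unique zero of $F$ in $B_{\delta}(z_*)$. Now suppose some other stationary point $x' \in \Omega$ lies close to $x_*$; since $x \mapsto \lambda_x$ is continuous, $\lambda_{x'}$ is close to $\lambda_{x_*}$, so $[x',\lambda_{x'}] \in B_{\delta}(z_*)$. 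But $F(x',\lambda_{x'}) = 0$ and $[x',\lambda_{x'}]\neq z_*$, contradicting uniqueness. Hence $x_*$ is isolated in $\Omega$.

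Finally, a compact set all of whose points are isolated must be finite: the singletons $\{x_*\}$ form an open cover of $\Omega$ (in the subspace topology), and compactness forces this cover to be finite. Therefore $\Omega = \{x_j\}_{j=1}^{J}$ with $J < \infty$.

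The main conceptual step is the isolation argument via the inverse function theorem applied to $F$ on the extended space $\R^{n+m}$, which is standard; the only subtlety is ensuring that a nearby stationary $x'$ really does produce a nearby $[x',\lambda_{x'}]$ in the ambient space, which is handled by continuity of the closed-form expression \eqref{eq:lambda_x-definition}. Everything else is essentially bookkeeping with compactness.
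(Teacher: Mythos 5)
Your proof is correct, and the underlying mechanism is the same as the paper's: nondegeneracy makes $z_*=[x_*,\lambda_{x_*}]$ an isolated zero of $F$ in $\R^{n+m}$, and continuity of $x\mapsto\lambda_x$ transfers that isolation down to $\Omega\subset S$. The packaging differs, though. The paper argues by contradiction: assuming $\Omega$ infinite, it takes a limit point $x_*$ of a sequence $x_i\in\Omega$, writes the first-order Taylor expansion of $F$ at $z_*$ along the zeros $z_i=[x_i,\lambda_{x_i}]$, normalizes the increments $\frac{z_i-z_*}{\|z_i-z_*\|}$, and extracts a unit vector $u_*$ with $F'(z_*)u_*=0$, contradicting invertibility. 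You instead invoke the inverse function theorem to get local injectivity of $F$ near each $z_*$, conclude that every point of $\Omega$ is isolated, and finish with the finite-subcover argument on the compact set $\Omega$. Your route uses a heavier off-the-shelf tool (the IFT, which requires $F\in\mathcal{C}^1$ --- available here since $f,g\in\mathcal{C}^2$), while the paper's Taylor argument only uses differentiability of $F$ at $z_*$; on the other hand, your write-up makes explicit a step the paper leaves tacit, namely that $\Omega$ is closed (via continuity of $x\mapsto F_x(x)$), which is what entitles one to assume the limit point $x_*$ lies in $\Omega$, respectively that $\Omega$ is compact. Both arguments hinge on continuity of $x\mapsto\lambda_x$, which you correctly justify from the closed-form expression \eqref{eq:lambda_x-definition} and the full-rank condition on $g'$.
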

The proof can be found in section~\ref{sec:finite-number-of-stationary-points-in-nondegenerate}
of the Appendix.

The next important definition characterizes relationship between
stationary points and $P_{T_x}f'(x)$ at any point $x\in S$.

\begin{definition} \label{def:wEB-definition}
We shall say that problem  \eqref{eq:main} satisfies the
\emph{\bf tangent Error Bound condition} (or {\bf tEB}), if there
exists a positive value
 $\mu
> 0$ with
\begin{equation} \label{eq:weak-error-bound}
\mu \, \rho(x, \Omega)\le \|P_{T_x}f'(x)\| =  \|F_x(x)\|, \;\;
\forall x \in S.
\end{equation}
\end{definition}

For a point $x\in S$ and $\gamma>0$ denote by  $\g_{\gamma}(x) =
\frac{x-P_{S}(x-\gamma f'(x))}{\gamma}$ the \it gradient mapping \rm
at the point $x$ for problem (\ref{eq:main})
\cite{Nesterov}. It is clear that the gradient mapping is related with one step of the GPA.

\begin{definition}
\label{def:wEB-2-definition} We shall say that problem
\eqref{eq:main} satisfies the  \emph{\bf gradient Error Bound
condition} (or {\bf gEB}) with constant $\nu>0$, if there exists
$\gamma_0>0$ such that for all  $0 < \gamma < \gamma_0$ and for
all  $x \in S$ we have
$$
\nu \rho (x,\Omega)\le \|\g_{\gamma}(x)\|.
$$
\end{definition}

Definition \ref{def:wEB-2-definition} was formulated (without gEB notation) in the paper \cite{MSbReview}.

By the proof of Proposition~\ref{cor:gradient-mapping-greater-F'}
(section~\ref{sec:projected-gradient-convergence-proof} of the
Appendix) in the case of smooth and proximally smooth manifold
the tEB condition entails the gEB condition with constant $\nu =
\frac{\mu}{1+L_{1}\gamma_{0}+\mu\gamma_{0}}$ and $\gamma_0 =
\min\{\frac{1}{L_1}, \frac{R}{L_0}\}$. Conditions tEB and gEB are
equivalent in the case of smooth and proximally smooth manifold
$S$. The proof is bulky and we omit it.

In contrast with  Error Bound conditions in unconstrained optimization
$$
\|f'(x)\| \geq \mu \, \rho(x, X_{\min}), \;\;
X_{\min} = \Arg \min_{x \in \R^n} f(x),
$$
or equivalent Lezanski-Polyak-Lojasiewicz condition\footnote{%
Sometimes is called the Polyak-Lojasiewicz  condition  or the Kurdyka-Lojasiewicz
condition $\mu \|f'(x)\|^{\alpha} \geq f(x) - f_{\min}$,
$\alpha\ge 1$. } \cite{karimi-etal2016}, we use the distance from
a point $x\in S$ to the set of
 \emph{stationary} points $\Omega$
and the value $\|P_{T_x} f'(x)\|$
instead of $\| f'(x)\|$.

Consider few examples.

\begin{example}
\rm One can explicitly calculate constant $\mu$ in the tEB
condition for a quadratic form on the unit Euclidean sphere.
\begin{lemma}
\label{lem:wEB-on-sphere}
The  tEB condition fulfills with constant
$$
\mu = \min_{i \neq j} |\lambda_i - \lambda_j|
$$
for a quadratic form $f(x) = (A x,
x)$, $A=A^{T}$, with different eigenvalues of the matrix $A$
($\lambda_1 < \lambda_2 < ... < \lambda_n$),
on the unit sphere $\{x \in \R^n : \|x\| = 1 \}$.
\end{lemma}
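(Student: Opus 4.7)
The plan is to work in an eigenbasis of $A$ and reduce the tEB inequality to a comparison between a weighted variance of the eigenvalues and a squared distance, both of which become transparent in the coefficients $c_i$ of $x$. First, for $g(x) = \|x\|^2 - 1$ the tangent space is $T_x = x^\perp$ and $P_{T_x} y = y - (y, x) x$; with $f'(x) = 2Ax$ and the shorthand $\bar\lambda \doteq (Ax, x)$ this yields
\[
P_{T_x} f'(x) = 2\bigl(Ax - \bar\lambda\, x\bigr).
\]
Setting this to zero forces $Ax = \bar\lambda x$, so the stationary set is exactly $\Omega = \{\pm e_i\}_{i=1}^n$, where $\{e_i\}$ is an orthonormal eigenbasis with $A e_i = \lambda_i e_i$.

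Next I would expand $x = \sum_i c_i e_i$, $\sum_i c_i^2 = 1$, and compute both sides explicitly. Since $\bar\lambda = \sum_i \lambda_i c_i^2$, a direct expansion gives
\[
\|P_{T_x} f'(x)\|^2 = 4\bigl(\|Ax\|^2 - \bar\lambda^{\,2}\bigr) = 4\Bigl(\sum_i \lambda_i^2 c_i^2 - \bigl(\sum_i \lambda_i c_i^2\bigr)^{\!2}\Bigr),
\]
which is four times the variance of $\lambda$ under the probability weights $p_i = c_i^2$. Applying the standard identity $\mathrm{Var}(\lambda) = \tfrac{1}{2}\sum_{i,j} p_i p_j (\lambda_i - \lambda_j)^2$ together with the uniform spectral gap $(\lambda_i - \lambda_j)^2 \ge \mu^2$ for $i \ne j$, this becomes
\[
\|P_{T_x} f'(x)\|^2 = 2\sum_{i,j} c_i^2 c_j^2 (\lambda_i - \lambda_j)^2 \ge 2\mu^2 \sum_{i \ne j} c_i^2 c_j^2 = 2\mu^2 \Bigl(1 - \sum_i c_i^4\Bigr).
\]

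For the other side, let $k$ be an index with $|c_k| = \max_i |c_i|$; replacing $e_k$ by $-e_k$ if needed I may assume $c_k \ge 0$. Then the closest point of $\Omega$ to $x$ is $e_k$, so $\rho(x, \Omega)^2 = \|x - e_k\|^2 = 2(1 - c_k)$. The argument concludes with two elementary estimates: $\sum_i c_i^4 \le c_k^2 \sum_i c_i^2 = c_k^2$, and $1 - c_k^2 = (1-c_k)(1+c_k) \ge 1 - c_k$ since $c_k \in [0,1]$. Chaining them,
\[
\|P_{T_x} f'(x)\|^2 \ge 2\mu^2 (1 - c_k^2) \ge 2\mu^2 (1 - c_k) = \mu^2 \rho(x, \Omega)^2,
\]
which is precisely the tEB condition with the claimed $\mu$. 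I do not foresee any genuine obstacle here, as every step is a standard identity or an elementary inequality; the only small point to verify carefully is the identification $\rho(x, \Omega) = \sqrt{2 - 2\max_i |c_i|}$, but this is immediate from the orthonormal expansion.
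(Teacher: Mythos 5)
Your argument is correct: the identification $P_{T_x}f'(x)=2(Ax-(Ax,x)x)$, the computation $\|P_{T_x}f'(x)\|^2=4(\|Ax\|^2-\bar\lambda^2)$, the variance identity $\sum_i p_i\lambda_i^2-(\sum_i p_i\lambda_i)^2=\tfrac12\sum_{i,j}p_ip_j(\lambda_i-\lambda_j)^2$ with $p_i=c_i^2$, the formula $\rho(x,\Omega)^2=2(1-\max_i|c_i|)$, and the chain $1-\sum_ic_i^4\ge 1-c_k^2\ge 1-c_k$ all check out, and they deliver exactly $\mu\,\rho(x,\Omega)\le\|P_{T_x}f'(x)\|$ with $\mu=\min_{i\ne j}|\lambda_i-\lambda_j|$. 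However, your route is genuinely different from the paper's. The paper does not diagonalize and compute globally; instead it localizes $x$ to the spherical cap $S_k=\{x:\|x\|=1,\ x_k\ge 1/\sqrt2\}$ around the nearest stationary point, parametrizes the cap by the chart $u\mapsto[u,\sqrt{1-\|u\|^2}]$, introduces the unconstrained function $h(u)=f(\phi(u))=\sum_i(\lambda_i-\lambda_k)x_i^2$, and chains three estimates: $\|x-e_k\|/\sqrt2\le\|u\|$ (an angle argument on the cap), $\|u\|\le\|h'(u)\|/(2\mu)$ (a coordinatewise spectral-gap bound), and $\|h'(u)\|\le\sqrt2\,\|P_{T_x}f'(x)\|$ (via the bound $\|\phi'(u)\ell\|\le\sqrt2$ on the chart's Jacobian and the inclusion $\phi'(u)\ell\in T_x$). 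Your approach buys a shorter, fully algebraic and global proof with no case split over caps, no figure, and no Jacobian or angle estimates — each step is an identity or an elementary inequality in the coefficients $c_i$. The paper's approach buys a template that does not depend on an explicit eigenexpansion of the objective: reducing a constrained error bound to an unconstrained gradient bound in a local chart is the pattern that extends to more general manifolds (indeed it foreshadows the chart-free Taylor argument used in Theorem~\ref{thm:wEB-for-nondegenerate}), whereas your variance identity is specific to quadratic forms on the sphere. For this particular lemma your computation is the cleaner of the two and yields the same constant.
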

The proof can be found in section~\ref{sec:wEB-on-sphere-proof} of the
Appendix.
\end{example}

\begin{example}
\rm  A quadratic form satisfies the tEB condition on the Stiefel manifold
$S_{n, k} = \{X \in \R^{n \times k} : X^T X = I_k \}$  \cite[Corollary 1]{liu-etal2015}.
This fact generalizes Lemma \ref{lem:wEB-on-sphere}.
\end{example}

\rm If problem (\ref{eq:main}) is nondegenerate then
the tEB condition holds.
\begin{theorem}
\label{thm:wEB-for-nondegenerate}
If problem $\eqref{eq:main}$ is nondegenerate, then the tEB condition is fulfilled with some constant $\mu>0$.
\end{theorem}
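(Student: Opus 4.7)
My plan is to combine a local argument near each stationary point (using the invertibility of $F'(x_*, \lambda_{x_*})$ and Taylor's theorem) with a compactness argument away from the stationary points, then to use Lemma~\ref{lem:finite-number-of-stationary-points-in-nondegenerate} to glue the two parts together.

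First, by Lemma~\ref{lem:finite-number-of-stationary-points-in-nondegenerate} the set $\Omega = \{x_1, \ldots, x_J\}$ is finite. For each stationary point $x_j$ set $z_j = (x_j, \lambda_{x_j})$; we have $F(z_j) = 0$ and $\|F'(z_j)^{-1}\| \le \sigma_0$. Since the map $x \mapsto \lambda_x = -(g'g'^T)^{-1}g'f'$ is continuous on $S$, the point $(x,\lambda_x)$ lies close to $z_j$ whenever $x \in S$ is close to $x_j$. Using the Taylor expansion
\[
F(x, \lambda_x) = F'(z_j)\bigl((x,\lambda_x) - z_j\bigr) + r(x),
\quad \|r(x)\| \le \tfrac{L_{1,F}}{2}\, \|(x,\lambda_x)-z_j\|^2,
\]
and the lower bound $\|F'(z_j) v\| \ge \sigma_0^{-1}\|v\|$ implied by $\|F'(z_j)^{-1}\| \le \sigma_0$, I get that for $(x,\lambda_x)$ sufficiently close to $z_j$,
\[
\|F(x,\lambda_x)\| \ge \tfrac{1}{2\sigma_0}\, \|(x,\lambda_x)-z_j\| \ge \tfrac{1}{2\sigma_0}\, \|x - x_j\|.
\]
Thus for some $\delta_j > 0$ the local estimate $\|P_{T_x}f'(x)\| = \|F_x(x)\| \ge \tfrac{1}{2\sigma_0}\|x-x_j\| \ge \tfrac{1}{2\sigma_0}\,\rho(x,\Omega)$ holds for all $x \in S \cap B_{\delta_j}(x_j)$.

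Next, pick $\delta = \min_j \delta_j > 0$ and set $K = S \setminus \bigcup_{j=1}^J B_\delta(x_j)$. Since $S$ is compact and the balls are open, $K$ is compact; by construction $K$ contains no stationary point, so the continuous function $x \mapsto \|P_{T_x}f'(x)\|$ is strictly positive on $K$ and attains a positive minimum $\alpha > 0$. For any $x \in K$ we therefore have
\[
\|P_{T_x}f'(x)\| \ge \alpha \ge \frac{\alpha}{\diam S}\, \rho(x,\Omega),
\]
where $\diam S < \infty$ by compactness. Taking $\mu = \min\bigl(\tfrac{1}{2\sigma_0},\, \alpha/\diam S\bigr) > 0$ and combining the local bound on each $B_\delta(x_j) \cap S$ with the bound on $K$ yields \eqref{eq:weak-error-bound} for every $x \in S$.

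The only nontrivial step is the local linear lower bound in the first paragraph: one has to be careful that the natural Taylor estimate controls the joint norm of $(x - x_j, \lambda_x - \lambda_{x_j})$, not just $\|x-x_j\|$, and then drop to $\|x-x_j\|$ at the end. Continuity of $x \mapsto \lambda_x$ (guaranteed by the full-rank condition for $g'$) and the Lipschitz bound $L_{1,F}$ on $F'$ in a neighborhood of $S$ make the radius of validity $\delta_j$ quantitatively explicit, which is what lets us pass from a pointwise statement at each $x_j$ to the uniform constant $\mu$ on all of $S$.
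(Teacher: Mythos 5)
Your proposal is correct and follows essentially the same route as the paper: finiteness of $\Omega$ via Lemma~\ref{lem:finite-number-of-stationary-points-in-nondegenerate}, a local linear lower bound near each $x_j$ from the Taylor expansion of $F(x,\lambda_x)$ combined with $\|F'(z_j)v\|\ge\sigma_0^{-1}\|v\|$, a Weierstrass/compactness bound away from $\Omega$, and the final constant $\mu=\min\{1/(2\sigma_0),\,\alpha/\diam S\}$. The only cosmetic difference is that you use the Lipschitz (Lagrange-type) remainder bound where the paper uses the little-$o$ form together with the Lipschitz constant $L_\lambda$ of $x\mapsto\lambda_x$ to fix the radius of validity.
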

The proof can be found in section~\ref{sec:wEB-for-nondegenerate-proof}
of the Appendix.

If we demand additionally thrice
continuous differentiability of all functions
then, by Taylor's formula with  the Lagrange form of the remainder,
one can estimate radius $r$ and constant $\mu$ in the tEB condition in Formula
(\ref{eq:In}) via Lipschitz constants.

We want to pay attention that  the gEB condition holds under conditions of Theorem~\ref{thm:wEB-for-nondegenerate} by
Proposition~\ref{cor:gradient-mapping-greater-F'}.
Suppose that $\Omega$ is the set of global minima
from the set  $S\cap\{ x\ :\ f(x)\le f(x_0)\}$, and
$x_0\in S$ is a starting point for the GPA.
From  \cite{MSbReview}
we get that the GPA converges to some element of the set  $\Omega$ with
linear rate. Also in the case when the tEB condition is valid, we immediately get
linear convergence of GPA, see \cite[Section 3.4]{bpt}.

Finally note that condition of non-degeneracy for problem
$\eqref{eq:main}$ is not necessary for fulfillment of the tEB
condition. Assume that the function $x\to \lambda_{x}$ is
continuously differentiable in a neighborhood of the set
 $S$ of the form $U_{S}(\delta)$, $\delta>0$, and there exists a number
 $\mu>0$ such that for any point $x_{*}\in \Omega$
the next condition holds
$$
\left\|F'_{z}(x_{*},\lambda_{x_{*}})\left(\begin{array}{c} I_{n} \\
\lambda_{x}'|_{x=x_{*}}
\end{array}\right)h\right\| \ge\mu \| h\|,\quad \forall
h\in T_{x_{*}}\subset\R^n.
$$
Then the tEB condition holds in problem (\ref{eq:main}). The
proof repeats the proof of
Theorem~\ref{thm:wEB-for-nondegenerate}. We shall not discuss
this approach for proving the tEB condition because we need
existence of the inverse matrix  $F'(x,\lambda_{x})$, for any
$x\in \Omega$, in our situation.

\subsection{Convergence of the Algorithm}
\label{sec:main-results-part-2}

\begin{lemma}
\label{lem:sigma-around-stationary} Suppose that $f, g_i \in
\mathcal{C}^2$, the function $\lambda_x$ is defined in
\eqref{eq:lambda_x-definition}, the function $F'(x, \lambda_x)$,
is defined in \eqref{eq:F'_x-definition} and is
Lipschitz continuous on  $S$ with constant $L_{1, Fx}$%
\footnote{It is sufficient to demand Lipschitz continuity of
$F'(x, \lambda_x)$ in some neighborhood of stationary points.
This leads to one more restriction of the value  $\beta$ from
above.
Moreover, we can consider weaker condition $\|F'(x, \lambda_x) -
F'(x_*, \lambda_{x_*})\| \leq L_{1,Fx} \|x - x_*\|$,
where $x_*$ is the nearest stationary point to the point $x\in S$.}%
: $\|F'(x, \lambda_x) - F'(y, \lambda_{y})\| \leq L_{1, Fx} \|x - y\|$
forall  $x, y \in S$.

Then for any  $\beta \in [0, 1)$ the next estimate
$$ 
\|F'(x, \lambda_x)\| \leq \frac{\sigma_0}{1 - \beta}, \quad
\forall x \in S : \;\rho(x, \Omega) \leq \frac{\beta}{ \sigma_0
L_{1,Fx}}
$$
takes place.
\end{lemma}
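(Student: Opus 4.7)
As typeset, the conclusion bounds the operator norm $\|F'(x,\lambda_x)\|$; however, the nondegeneracy hypothesis \eqref{eq:sigma-0-definition} controls only $\|F'(x_*,\lambda_{x_*})^{-1}\|$. Since $\|A^{-1}\|\le\sigma_0$ is compatible with $\|A\|$ being arbitrarily large, the literal conclusion cannot follow from the stated hypotheses alone without an auxiliary upper bound on $\|F'(x_*,\lambda_{x_*})\|$. My plan is therefore to set up the natural perturbation skeleton that proves the bound in either reading, and to make explicit where the literal version demands an extra assumption.

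The skeleton is a Banach/Neumann perturbation argument anchored at the nearest stationary point. \emph{Step 1.} Lemma~\ref{lem:finite-number-of-stationary-points-in-nondegenerate} makes $\Omega$ finite, so for any admissible $x$ there exists $x_*\in\Omega$ with $\|x-x_*\|=\rho(x,\Omega)\le\beta/(\sigma_0 L_{1,Fx})$. \emph{Step 2.} The Lipschitz hypothesis on $F'(\cdot,\lambda_\cdot)$ immediately delivers
\[
\|F'(x,\lambda_x)-F'(x_*,\lambda_{x_*})\|\le L_{1,Fx}\,\rho(x,\Omega)\le\frac{\beta}{\sigma_0}.
\]
\emph{Step 3 (literal reading).} The triangle inequality gives $\|F'(x,\lambda_x)\|\le\|F'(x_*,\lambda_{x_*})\|+\beta/\sigma_0$, and matching the target $\sigma_0/(1-\beta)$ already at $\beta=0$ forces $\|F'(x_*,\lambda_{x_*})\|\le\sigma_0$; this is a genuine strengthening of \eqref{eq:sigma-0-definition} and must be imported as an additional hypothesis (or as a redefinition of $\sigma_0$ to dominate both $\|F'(x_*,\lambda_{x_*})\|$ and $\|F'(x_*,\lambda_{x_*})^{-1}\|$ on the finite set $\Omega$). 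With that reinforcement in place, the required bound drops out of Steps~1--2 by the triangle inequality and the elementary inequality $\sigma_0+\beta/\sigma_0\le\sigma_0/(1-\beta)$ for $\sigma_0\ge 1$.

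The main obstacle is Step~3: strictly the given hypotheses control $\|F'^{-1}\|$, not $\|F'\|$. If one instead reads the conclusion as the matching bound on the inverse (which is the form consumed by the Newton criterion \eqref{eq:newton-convergence-condition} in the sequel), then the factorisation
\[
F'(x,\lambda_x)=F'(x_*,\lambda_{x_*})\bigl(I+F'(x_*,\lambda_{x_*})^{-1}\Delta\bigr),\qquad \Delta=F'(x,\lambda_x)-F'(x_*,\lambda_{x_*}),
\]
together with $\|F'(x_*,\lambda_{x_*})^{-1}\Delta\|\le\sigma_0\cdot(\beta/\sigma_0)=\beta<1$, makes the bracketed factor invertible with inverse norm at most $1/(1-\beta)$ by the Neumann series, so $\|F'(x,\lambda_x)^{-1}\|\le\sigma_0/(1-\beta)$ follows without any extra hypothesis. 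In either reading the arithmetic is routine; the only substantive issue is the one flagged above.
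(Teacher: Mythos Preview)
You have diagnosed the situation exactly: the displayed conclusion is a typo, and the intended inequality is $\|F'(x,\lambda_x)^{-1}\|\le\sigma_0/(1-\beta)$. The paper's own proof opens with ``We have $\|F'(x,\lambda_x)^{-1}\|=\ldots$'' and never bounds $\|F'(x,\lambda_x)\|$ itself; moreover, the only place the lemma is consumed (the proof of Theorem~\ref{thm:convergence-to-stationary}) quotes it as $\|F'(\widehat{x},\lambda_{\widehat{x}})^{-1}\|\le\sigma_0/(1-\beta)$.

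Your ``inverse reading'' argument --- anchor at the nearest $x_*\in\Omega$, bound $\|\Delta\|\le L_{1,Fx}\rho(x,\Omega)\le\beta/\sigma_0$, factor $F'(x,\lambda_x)=F'(x_*,\lambda_{x_*})(I+F'(x_*,\lambda_{x_*})^{-1}\Delta)$, and apply the Neumann bound $\|(I+X)^{-1}\|\le 1/(1-\|X\|)$ --- is line-for-line the paper's proof. Your observation that the literal statement would require the unrelated extra hypothesis $\|F'(x_*,\lambda_{x_*})\|\le\sigma_0$ is correct and is simply further confirmation that the printed conclusion is missing a ${}^{-1}$.
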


The proof can be found in
section~\ref{sec:sigma-around-stationary-proof} of the Appendix.

Gathered together the above mentioned results we obtain the
theorem about the convergence of the considered combined algorithm. Recall that
$\gamma$ is a fix step-size in the GPA (1st phase) and $\Delta f
= f(x_0) - \min_{x \in S} f(x)$ is the fluctuation of the
function.
\begin{theorem}
\label{thm:convergence-to-stationary} Let $\Omega =
\{x_{j}\}_{j=1}^{J}$ be the set of stationary points in problem
\eqref{eq:main}, $\Sigma = \{ [x,\lambda_{x}]\ :\ x\in\Omega\}$.
Assume that problem \eqref{eq:main} is nondegenerate  and the tEB
condition holds with constant $\mu$ \eqref{eq:weak-error-bound}.
 Suppose that in $d$-neigh\-bor\-hood $U_{\Omega}(d)$ of the set
$\Omega$ the function $S\ni x\to \lambda_x$ is Lipschitz
continuous with constant $L_{\lambda}$ and the function $S\ni
x\to F'(x, \lambda_x)$ is Lipschitz continuous with constant
$L_{1,Fx}$. Suppose that the function $F'(z)$ is Lipschitz
continuous with constant $L_{1,F}$ on the set $U_{\Sigma}(2r)$
for $r=d\sqrt{1+L_{\lambda}^{2}}$.
 Let $\sigma_0>0$ and $\beta\in (0,1)$ be such constants that estimate
 \eqref{eq:sigma-0-definition} holds,
$\beta \leq L_{1,Fx} \sigma_0 d$ and
\begin{equation}\label{bsl}
\frac{1-\beta}{2\sigma_{0}L_{1,F}}\le r.
\end{equation}
Then for any point $x_0\in S$ the Algorithm with the switching
condition
\begin{equation} \label{eq:switching-condition-with-constants}
\|P_{T_x} f'(x)\| \leq C = \min\Big\{\frac{\mu \beta}{L_{1,Fx}
\sigma_0}, \; \frac{(1 - \beta)^2}{4 L_{1,F} \sigma_0^2} \Big\},
\end{equation}
or with another condition
\eqref{eq:switching-condition-on-delta-x} $\|x_k - x_{k-1}\| \leq
\frac{\gamma}{1 + \gamma L_1} C$, converges to some stationary
point  $x_* \in \Omega$. We need no more than
$$
N(\varepsilon)
= N_1(C) + N_2(\varepsilon)
= \left\lceil \frac{2 \Delta f (1 + \gamma L_1)^2}%
{C^2 \gamma (1 - \gamma L_1)}%
\right\rceil
+ \Big\lceil \log_2 \Big(\frac{C \sigma_0}{\varepsilon (1 - \beta)} \Big) \Big\rceil + 1
$$
steps of the Algorithm to achieve the inequality $\rho(x_k, \Omega)
\leq \varepsilon$. In particular, we need  $N_1(C)$ steps of the
GPA and  $N_2(\varepsilon)$  steps of the modified NM.
\end{theorem}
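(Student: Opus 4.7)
The plan is to split the proof along the two phases of the Algorithm: first bound the number of GPA iterations needed to trigger the switch using Corollary~\ref{cor:projected-gradient-number-of-steps}, then verify that the switching point lies inside the convergence domain of the modified Newton method so Proposition~\ref{prop:mod-newton-convergence} supplies the second phase count and rate.

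For the first phase, I would apply Corollary~\ref{cor:projected-gradient-number-of-steps} with threshold $\varepsilon=C$: this gives directly that within $N_1(C)$ iterations some index $k$ satisfies $\|P_{T_{x_k}}f'(x_k)\|\le C$, at which point the switch~\eqref{eq:switching-condition} fires. For the alternative switch~\eqref{eq:switching-condition-on-delta-x}, the auxiliary estimate $\|P_{T_{x_k}}f'(x_k)\|\le(1/\gamma+L_1)\|x_k-x_{k-1}\|$ recorded in the second remark after the Algorithm shows the same implication. This settles the count of $N_1(C)$ GPA steps.

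For the second phase, at the switching instant set $\widehat{x}_0=x_k$, $\lambda_0=\lambda_{\widehat{x}_0}$, so that $F(z_0)=F_x(\widehat{x}_0)=P_{T_{\widehat{x}_0}}f'(\widehat{x}_0)$ and $\|F(z_0)\|\le C$. Applying the tEB condition~\eqref{eq:weak-error-bound} together with the first half of the minimum defining $C$ yields
\[
\rho(\widehat{x}_0,\Omega)\le\frac{\|F_x(\widehat{x}_0)\|}{\mu}\le\frac{C}{\mu}\le\frac{\beta}{L_{1,Fx}\sigma_0}.
\]
Lemma~\ref{lem:sigma-around-stationary} then gives $\|F'(z_0)^{-1}\|\le\sigma_0/(1-\beta)$, and the second half of the minimum defining $C$ produces
\[
L_{1,F}\,\|F'(z_0)^{-1}F(z_0)\|\cdot\|F'(z_0)^{-1}\|\le\frac{L_{1,F}\sigma_0^2 C}{(1-\beta)^2}\le\frac{1}{4},
\]
i.e.\ the hypothesis~\eqref{eq:newton-convergence-condition}. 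The bound $K=\|F'(z_0)^{-1}F(z_0)\|\le\sigma_0 C/(1-\beta)\le(1-\beta)/(4\sigma_0 L_{1,F})$, combined with assumption~\eqref{bsl}, ensures $2K\le r$, so the Lipschitz regularity of $F'$ on $B_{2K}(z_0)\subset U_\Sigma(2r)$ required by Proposition~\ref{prop:mod-newton-convergence} is available. The linear rate of that proposition then gives $\|\widehat{x}_k-x_*\|\le 2^{1-k}\sigma_0 C/(1-\beta)$, and solving $2^{1-k}\sigma_0 C/(1-\beta)\le\varepsilon$ for $k$ yields the stated $N_2(\varepsilon)$.

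The main obstacle is orchestrating the two entries of $\min$ in~\eqref{eq:switching-condition-with-constants}: the first entry is tuned precisely so that the tEB condition pushes $\rho(\widehat{x}_0,\Omega)$ into the regime where Lemma~\ref{lem:sigma-around-stationary} controls $\|F'(z_0)^{-1}\|$, and the second entry is tuned precisely so that the resulting product meets the Newton quadratic-convergence test~\eqref{eq:newton-convergence-condition}. A secondary subtlety is verifying that the uniqueness ball $B_r(z_0)$ and the Lipschitz ball $B_{2K}(z_0)$ lie inside $U_\Sigma(2r)$; this is exactly what condition~\eqref{bsl} together with the Lipschitzness of $x\mapsto\lambda_x$ with constant $L_\lambda$ is designed to encode.
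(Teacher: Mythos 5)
Your proposal is correct and follows essentially the same route as the paper's proof: Corollary~\ref{cor:projected-gradient-number-of-steps} with threshold $C$ for the GPA phase, the first entry of the minimum plus tEB to place $\widehat{x}_0$ within $\beta/(L_{1,Fx}\sigma_0)\le d$ of $\Omega$ so Lemma~\ref{lem:sigma-around-stationary} bounds $\|F'(z_0)^{-1}\|$ by $\sigma_0/(1-\beta)$, the second entry to verify \eqref{eq:newton-convergence-condition}, and \eqref{bsl} with the $L_\lambda$-Lipschitzness of $x\mapsto\lambda_x$ to confine the Newton iterates to the region where $F'$ is $L_{1,F}$-Lipschitz. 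The accounting of $N_1(C)$ and $N_2(\varepsilon)$ matches the paper exactly.
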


The proof can be found in section
~\ref{sec:main-algorithm-convergence-proof}.

Note that the first phase of the Algorithm, the GPA, is more difficult
from computational point of view. One can minimize the
\emph{number of steps} for the GPA, choosing the step-size
$\gamma$ and minimizing  $N_1(C)$ as function of $\gamma$. The
optimal value is
$$
\gamma^* = \min\Big\{\frac{1}{3L_1}, \frac{R}{L_0}\Big\}.
$$

\section{Acknowledgements}
The work was supported by Russian Science Foundation (Project
16-11-10015).

The authors are greatful to B. T. Polyak for useful comments and
suggestions.

\section{Appendix: proofs.}

\subsection{Proof of Proposition~\ref{prop:stiefel-is-proximally-smooth}}
\label{sec:projection-on-simple-sets}

\begin{proposition}\label{prop:stiefel-is-proximally-smooth}
    The Stiefel manifold $S_{n,k}$ is proximally smooth set with constant $R=1$
    for all $n,k$, $n\ge k$.

\end{proposition}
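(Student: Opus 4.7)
The plan is to work directly with the metric projection onto $S_{n,k}$ and invoke the characterization used implicitly by the paper: in a finite-dimensional ambient space, a closed set is proximally smooth with constant $R$ if and only if its metric projection is single-valued on the tube $U_{S}(R)$ (continuity being automatic by the footnote following Definition~\ref{def:proximally-smooth}). This reduces the proposition to two independent statements: single-valuedness of $P_{S_{n,k}}$ on $U_{S_{n,k}}(1)$, and the exhibition of a point at distance exactly $1$ with multi-valued projection.

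For the positive direction I would start from the thin singular value decomposition $Y=U\Sigma V^{T}$ of an arbitrary $Y\in\R^{n\times k}$, with $U\in\R^{n\times k}$ having orthonormal columns, $V\in\R^{k\times k}$ orthogonal, and $\Sigma=\mathrm{diag}(\sigma_{1},\dots,\sigma_{k})$, $\sigma_{1}\ge\dots\ge\sigma_{k}\ge 0$. Expanding $\|Y-X\|^{2}=\|Y\|^{2}+k-2\,\mathrm{tr}(Y^{T}X)$ and using von Neumann's trace inequality $\mathrm{tr}(Y^{T}X)\le\sum_{i}\sigma_{i}$ (with equality at $X=UV^{T}$), I would read off
\[
\rho(Y,S_{n,k})=\Bigl(\sum_{i=1}^{k}(\sigma_{i}-1)^{2}\Bigr)^{1/2},\qquad UV^{T}\in P_{S_{n,k}}(Y).
\]
If $\rho(Y,S_{n,k})<1$, then $(\sigma_{i}-1)^{2}<1$ for every $i$, hence $\sigma_{i}>0$, so $Y$ has full rank $k$. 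From this I would conclude that $UV^{T}$ is independent of the SVD choice (the remaining ambiguities act by a common orthogonal change of basis on blocks of equal singular values and cancel in the product), giving single-valuedness of $P_{S_{n,k}}$ on $U_{S_{n,k}}(1)$ and proving proximal smoothness with $R=1$.

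For sharpness, I would exhibit an explicit point at distance $1$ with non-singleton projection: take $Y_{0}\in\R^{n\times k}$ whose first column is $0$ and whose remaining columns are $e_{2},\dots,e_{k}$. Its singular values are $0,1,\dots,1$, so $\rho(Y_{0},S_{n,k})=1$, while every matrix obtained by prepending a unit vector $u$ orthogonal to $\mathrm{span}(e_{2},\dots,e_{k})$ to the columns $e_{2},\dots,e_{k}$ realizes the minimal Frobenius distance. For any $R>1$ the tube $U_{S_{n,k}}(R)$ thus contains a point with multi-valued projection, ruling out proximal smoothness with constant larger than $1$.

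The main obstacle I anticipate is the gauge-invariance of $UV^{T}$ in the presence of repeated singular values: one needs to argue carefully that although the SVD of a full-rank matrix is not literally unique (one may permute or rotate inside each block of equal $\sigma_{i}$ and flip signs), the orthogonal polar factor $UV^{T}$ really is uniquely determined, so that the projection at $Y$ is genuinely a singleton. Everything else reduces to the trace inequality and an explicit distance computation for $Y_{0}$.
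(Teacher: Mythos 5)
Your overall strategy matches the paper's: reduce proximal smoothness with constant $R=1$ to single-valuedness of the metric projection on the tube $U_{S_{n,k}}(1)$ (continuity then being automatic in finite dimensions), and rule out any larger constant with the rank-deficient point $Y_0=[0,e_2,\dots,e_k]$ at distance exactly $1$, which is precisely the counterexample the paper uses. The difference lies in how the positive half is obtained: the paper simply cites the projection formula $P_{S_{n,k}}(Y)=UV^T$ and its single-valuedness for $\rho(Y,S_{n,k})<1$ from Absil and Malick, whereas you rederive it from von Neumann's trace inequality together with the identity $\rho(Y,S_{n,k})^2=\sum_i(\sigma_i-1)^2$ and the observation that $\rho<1$ forces every $\sigma_i>0$, i.e.\ full rank. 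This makes the proof self-contained and explains why the radius is exactly $1$ (the distance at which a singular value can first vanish), at the cost of having to handle the SVD ambiguities yourself.

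One step needs tightening. Single-valuedness of the projection is not the same as $UV^T$ being well defined (gauge-invariance of the polar factor across SVDs, which you correctly identify and which does hold for full-rank $Y$, since $UV^T=Y(Y^TY)^{-1/2}$); it is the statement that \emph{every} maximizer of $X\mapsto\mathrm{tr}(Y^TX)$ over $S_{n,k}$ equals $UV^T$. That requires the equality case of the trace inequality, which you assert only in the direction ``equality is attained at $X=UV^T$''. The converse is short and worth writing out: with $Y=U\Sigma V^T$ one has $\mathrm{tr}(Y^TX)=\sum_i\sigma_i\,(u_i,Xv_i)$ where $u_i$ and $Xv_i$ are unit vectors, so attaining the value $\sum_i\sigma_i$ forces $(u_i,Xv_i)=1$, hence $Xv_i=u_i$, for every $i$ with $\sigma_i>0$; when all $\sigma_i>0$ this gives $XV=U$, i.e.\ $X=UV^T$. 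With that sentence added, your argument is complete and correct, and the sharpness example goes through exactly as in the paper.
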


\begin{proof}
Proximal smoothness of the Stiefel manifold $\mathcal{S}_{n, k} =
\{X \in \R^{n \times k} : X^T X = I_k\}$, $ k \leq n$, with
constant of proximal smoothness $R=1$ follows from the result
about explicit form for the metric projection onto the Stiefel
manifold \cite[Proposition 7]{absil-malick2012}:
$$
P_{\mathcal{S}_{n, k}} (X) = U I_{k, n} V^T, \;\; \forall X = U
\Sigma V^T: \;\; \rho(X, \mathcal{S}_{n, k}) < 1.
$$
For the set  $\{X \in \R^{n \times k}: \rho(X, \mathcal{S}_{n,
k}) < 1\}$ the metric projection is a singleton. Here $U, V$ are
orthogonal matrices of a singular values decomposition for matrix
$X$, $I_{k, n} = [I_k, 0] \in \R^{n \times k}$. The distance
between matrices is understood in the Frobenius metric $\rho(X,
Y) = \| X-Y\|=\left( \mbox{\rm trace\,
}(X-Y)^{T}(X-Y)\right)^{1/2}$.

From upper smicontinuity of the metric projection in finite
dimensional space $R^{n \times k}$ \cite[Ch. 3, \S 1, Proposition
23]{Aubin} and its uniqueness we obtain that the function $U_{\mathcal{S}_{n,
k}}(1)\ni X \rightarrow P_{\mathcal{S}_{n, k}}X$ is continuous.

Finally, constant of proximal smoothness can not exceed 1. For
the point (matrix) $X_0 = [0, e_2, e_3, ..., e_k]\in\R^{n\times
k}$ with $\rho(X_0, \mathcal{S}_{n, k}) = 1$ there exists at
least two metric projections onto the Stiefel manifold: $X_{0,-} =
[-e_1, e_2, e_3, ..., e_k]$ and $X_{0,+} = [e_1, e_2, e_3, ...,
e_k]$.
\end{proof}

\subsection{Proof of Theorem~\ref{thm:projected-gradient-convergence}, Corollary~\ref{cor:projected-gradient-number-of-steps}
and Proposition~\ref{cor:gradient-mapping-greater-F'}}
\label{sec:projected-gradient-convergence-proof}

\begin{proof}[Proof of Theorem~\ref{thm:projected-gradient-convergence}]

For a natural $k$ define the functions
$$
\psi_k(x) = f(x_k) + (f'(x_k), x - x_k) + \frac{1}{2 \gamma} \|x
- x_k\|^2.
$$
By Lipschitz continuity of the gradient  $f'$ we have
$$
f(x_k) + (f'(x_k), x - x_k) + \frac{L_1}{2} \|x - x_k\|^2\ge
f(x),\quad \forall x\in \R^{n}\qquad\mbox{\rm and}
$$
\begin{equation} \label{eq:psi_k-as-upper-bound}
\psi_k(x) \geq f(x) + \frac{1}{2}\Big(\frac{1}{\gamma} - L_1\Big) \|x - x_k\|^2,
\end{equation}
for all $x \in \R^n$.

By the conditions $\gamma < \frac{R}{L_0}$,  $x_k \in S$, we get
$x_{k} - \gamma f'(x_k)\in S \cup U_{S}(R)$. Indeed
$$
\rho\left(x_k - \gamma f'(x_k), S \right)
\leq \gamma \left\| f'(x_k) \right\|
< \frac{R}{L_0} L_0 = R.
$$
Hence $x_{k+1}$ is a unique metric projection of the point $x_{k}
- \gamma f'(x_k)$ onto the set $S$.

From the equality
$$
x_{k+1} = P_{S}(x_k - \gamma f'(x_k))
= \arg \min_{x \in S} \|x - (x_k - \gamma f'(x_k))\|^2
= \arg \min_{x \in S} \psi_k(x)
$$
it follows that $\psi_k(x_{k+1}) \leq \psi_k(x)$ for all $x \in
S$. From the same equality and necessary condition of extremum
for function $\psi_k$ on the set  $S$ we have
\begin{equation} \label{eq:necessary-extremal-condition-by-normal-cone}
\psi_k'(x_{k+1}) = f'(x_k) + \frac{1}{\gamma} (x_{k+1} - x_k) \in
-\mathcal{N}(S, x_{k+1}), \;\; k \geq 0.
\end{equation}

Thus, taking into account inequality
\eqref{eq:psi_k-as-upper-bound}, we obtain
\begin{equation} \label{eq:decrease-of-f_x_k}
f(x_k)
= \psi_k (x_k)
\geq \psi_k (x_{k+1})
\geq f(x_{k+1}) + \frac{1}{2}\Big(\frac{1}{\gamma} - L_1\Big) \|x_{k+1} - x_k\|^2.
\end{equation}
Note that $\frac{1}{\gamma} - L_1>0$. From boundedness  of the
function $f$ on the compact set $S$  we get
\begin{equation} \label{eq:projected-gradient-convergence-of-delta-x_k}
\lim_{k \rightarrow \infty} \|x_{k+1} - x_k\| = 0.
\end{equation}

For proving $\lim\limits_{k\to\infty}\rho (x_{k},\Omega)=0$
suppose the contrary:  $\lim\limits_{k \rightarrow \infty}
\rho(x_k, \Omega) \neq 0$. Then there exists  $\epsilon
> 0$ and subsequence $x_{k_i}$ with $\rho(x_{k_i},
\Omega) \geq \epsilon$ for all $i$. Consider a converging
subsequence (that exists by compactness of the set $S$), denote
it again  $\{x_{k_i}\}$. Let $x_* = \lim\limits_{i \to \infty}
x_{k_i}$.

By \eqref{eq:necessary-extremal-condition-by-normal-cone} we have
$$
f'(x_{k_i}) + \frac{1}{\gamma} (x_{k_i + 1} - x_{k_i})
\in -\mathcal{N}(S, x_{k_i + 1}).
$$
Passing to the limit $i \to \infty$, using upper semicontinuity
of the normal cone  $\mathcal{N}(S, \cdot)$ and the limit property
\eqref{eq:projected-gradient-convergence-of-delta-x_k} we obtain
$$
f'(x_*) \in -\mathcal{N}(S, x_{*}).
$$
But $\lim\limits_{i \to \infty} \rho(x_{k_i}, \Omega) = 0 \ge
\epsilon$. A contradiction. This implies the convergence of the
sequence $\{x_{k_i}\}$ to the set of stationary points.

Inclusion \eqref{eq:necessary-extremal-condition-by-normal-cone}
implies the next important inequality
\begin{equation} \label{eq:relation-betweed-rho-and-delta-x}
\| P_{T_{x_{k+1}}}f'(x_{k+1})\|=\rho\big(-f'(x_{k+1}),
\mathcal{N}(S, x_{k+1})\big) \leq \Big(\frac{1}{\gamma} +
L_1\Big) \|x_{k+1} - x_k\|,
\end{equation}
we use here Lipschitz continuity of gradient $\|f'(x_{k+1}) -
f'(x_k)\| \leq L_1 \|x_{k+1} - x_k\|$. By formula
\eqref{eq:projected-gradient-convergence-of-delta-x_k} it follows
that  $\rho\big(-f'(x_k), \mathcal{N}(S, x_k)\big) \to_{k \to
\infty} 0$.
\end{proof}

\begin{proof}[Proof of Corollary~\ref{cor:projected-gradient-number-of-steps}]
The value $\Delta f = f(x_0) - f_{\min}$ is bounded by the
Weierstrass theorem. Consistently using inequality
\eqref{eq:decrease-of-f_x_k} $N$ times we get
$$
\Delta f
= f(x_0) - f_{\min} \geq f(x_0) - f(x_{N})
\geq \frac{N}{2}\Big(\frac{1}{\gamma} - L_1\Big) \min_{i=1,..., N}\|x_{i} - x_{i-1}\|^2.
$$
Thus there exists  $1 \leq i \leq N$ such that
\begin{equation} \label{eq:delta-x-on-N}
\|x_{i} - x_{i-1}\|^2 \leq \frac{2 \Delta f}{N \cdot
(\frac{1}{\gamma} - L_1)}
\end{equation}
and from \eqref{eq:relation-betweed-rho-and-delta-x} for given
$i$ the next estimate holds
$$
\rho\big(-f'(x_{i}), \mathcal{N}(S, x_{i})\big) \leq
\Big(\frac{1}{\gamma} + L_1\Big) \sqrt{\frac{2 \Delta f}{N \cdot
(\frac{1}{\gamma} - L_1)}}.
$$
Hence for an arbitrary accuracy $\varepsilon
> 0$ in no more than
$$
N(\varepsilon)
= \left\lceil \frac{2 \Delta f (\frac{1}{\gamma} + L_1)^2}%
{\varepsilon^2  (\frac{1}{\gamma} - L_1)}%
\right\rceil
= \left\lceil \frac{2 \Delta f (1 + \gamma L_1)^2}%
{\varepsilon^2 \gamma (1 - \gamma L_1)}%
\right\rceil
$$
steps we shall find a point  $x_i$ with $ \|
P_{T_{x_{i}}}f'(x_{i})\| =\rho\big(-f'(x_{i}), \mathcal{N}(S,
x_{i})\big) \leq \varepsilon$.

Finally, this point and the number of iteration  $i$ also can be
found from condition \eqref{eq:relation-betweed-rho-and-delta-x}:
\begin{equation}\label{eq:est-on-xi}
 \|x_{i} - x_{i-1}\| \leq
\frac{\varepsilon}{\frac{1}{\gamma} + L_1} = \frac{\varepsilon}{1
+ \gamma L_1} \gamma.
\end{equation}
\end{proof}

\begin{proposition}
\label{cor:gradient-mapping-greater-F'} Suppose that conditions
of Theorem~\ref{thm:projected-gradient-convergence} hold and the
set $S$ is a compact, smooth and proximally smooth with constant
$R$ manifold without edge, that is given by the system
\eqref{eq:S-definition}. Assume that the tEB condition holds with
constant $\mu$. Then for all $x \in S$ the gEB condition holds
\begin{gather*}
\frac{\mu}{1+L_{1}\gamma_{0}+\mu\gamma_{0}}\rho (x,\Omega)\le
\|\g_{\gamma}(x)\|, \quad\mbox{\rm where}\ \gamma \in (0,
\gamma_{0}),\quad \gamma_{0}=\min\Big\{\frac{1}{L_1},
\frac{R}{L_0}\Big\}.
\end{gather*}
\end{proposition}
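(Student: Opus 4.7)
The plan is to derive the gEB inequality by performing a single GPA step from $x$ and then transferring the tEB bound at the landing point back to $x$. Concretely, fix $x\in S$ and $\gamma\in(0,\gamma_0)$, and let $x^{+}=P_S(x-\gamma f'(x))$, so that $\mathcal{G}_\gamma(x)=(x-x^{+})/\gamma$. The condition $\gamma<R/L_0$ used in the proof of Theorem~\ref{thm:projected-gradient-convergence} already guarantees $x-\gamma f'(x)\in S\cup U_S(R)$, so $x^{+}$ is well-defined and belongs to $S$.

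The first key step is to extract a normal-cone inclusion from the projection identity. By the defining property of the metric projection onto a proximally smooth set in $U_S(R)$, the residual vector $(x-\gamma f'(x))-x^{+}$ lies in $\mathcal{N}(S,x^{+})$. Dividing by $\gamma$ gives
\[
\mathcal{G}_\gamma(x)-f'(x)\in \mathcal{N}(S,x^{+}).
\]
Since $\mathcal{N}(S,x^{+})$ is exactly the orthogonal complement of $T_{x^{+}}$ under the full rank condition, applying $P_{T_{x^{+}}}$ to this inclusion yields the crucial identity $P_{T_{x^{+}}}f'(x)=P_{T_{x^{+}}}\mathcal{G}_\gamma(x)$.

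Next I would compare the tangent projection of $f'$ at $x^{+}$ with that at $x$. Using the identity just obtained together with $L_1$-Lipschitz continuity of $f'$ and $\|x^{+}-x\|=\gamma\|\mathcal{G}_\gamma(x)\|$,
\[
\|P_{T_{x^{+}}}f'(x^{+})\|\le \|P_{T_{x^{+}}}f'(x)\|+\|f'(x^{+})-f'(x)\|\le (1+L_1\gamma)\,\|\mathcal{G}_\gamma(x)\|.
\]
The tEB condition applied at $x^{+}\in S$ then gives $\mu\,\rho(x^{+},\Omega)\le (1+L_1\gamma)\|\mathcal{G}_\gamma(x)\|$.

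Finally, the triangle inequality $\rho(x,\Omega)\le \|x-x^{+}\|+\rho(x^{+},\Omega)$ combined with the preceding estimate yields
\[
\mu\,\rho(x,\Omega)\le \bigl(\mu\gamma+1+L_1\gamma\bigr)\|\mathcal{G}_\gamma(x)\|,
\]
and since $\gamma<\gamma_0$ the coefficient on the right is bounded by $1+L_1\gamma_0+\mu\gamma_0$, giving exactly the claimed gEB bound. There is no real obstacle here; the only delicate point is the normal-cone identification of $\mathcal{G}_\gamma(x)-f'(x)$, which relies on $x-\gamma f'(x)$ lying in $U_S(R)\cup S$ so that the standard proximal characterization applies — this is why the restriction $\gamma<\gamma_0$ (rather than just $\gamma<1/L_1$) is essential.
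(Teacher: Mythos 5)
Your proof is correct and takes essentially the same route as the paper's: the normal-cone inclusion you extract from the projection step is exactly how the paper establishes its inequality \eqref{eq:relation-betweed-rho-and-delta-x}, which its proof then cites to get $\|P_{T_{x^+}}f'(x^+)\|\le(1+L_1\gamma)\|\g_\gamma(x)\|$ before applying tEB at $x^+$ and the $1$-Lipschitz property of the distance function, just as you do.
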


\begin{proof}[Proof of Proposition~\ref{cor:gradient-mapping-greater-F'}]
Fix $x\in S$. Let $x_{1}=P_{S}(x-\gamma f'(x))$ and $
\g_{\gamma}(x) = \frac{1}{\gamma}\| x-x_{1}\|$. Then we obtain by
formula (\ref{eq:relation-betweed-rho-and-delta-x}) with $x_{k}=x$
and $x_{k+1}=x_{1}$ that
$$
\| P_{T_{x_{1}}}f'(x_{1})\|\le \left(
L_{1}+\frac{1}{\gamma}\right)\| x-x_{1}\|.
$$
From 1-Lipschitz condition for the distance function and the
 tEB condition we have
$$
\rho (x,\Omega)-\| x-x_{1}\|\le \rho (x_{1},\Omega)\le
\frac{1}{\mu}\| P_{T_{x_{1}}}f'(x_{1})\|\le
\frac{L_{1}+\frac{1}{\gamma}}{\mu}\| x-x_{1}\|=
\frac{1+L_{1}\gamma}{\mu}\| \g_{\gamma}(x)\|,
$$
$$
\rho (x,\Omega)\le \frac{1+L_{1}\gamma}{\mu}\|
\g_{\gamma}(x)\|+\gamma\frac{\|
x-x_{1}\|}{\gamma}\le\frac{1+L_{1}\gamma_{0}+\mu\gamma_{0}}{\mu}\|
\g_{\gamma}(x)\|.
$$
\end{proof}

\subsection{Proof of Lemma~\ref{lem:finite-number-of-stationary-points-in-nondegenerate}}
\label{sec:finite-number-of-stationary-points-in-nondegenerate}

\begin{proof}
Suppose the contrary: the set $\Omega$ of stationary points in
nondegenerate problem is infinite. From compactness of the set $S$
the set $\Omega$ has a limit point: $x_{i}\to x_{*}$,
$x_{i},x_{*}\in\Omega$. By continuity of the function $S\ni x\to
\lambda_{x}$ we have $z_{i}=[x_{i},\lambda_{x_{i}}]\to z_{*} =
[x_{*},\lambda_{x_{*}}]$.

Consider the Taylor formula for the function $F(z)$ in a
neighborhood of the point $z_* = (x_{*},\lambda_{x_{*}})$:
$$
F(z_i) - F(z_*) = F'(z_{*}) (z_i - z_*) + o(\|z_i - z_*\|), \;\;
i = 0, 1, ...
$$
Points  $x_i$ and $x_*$  are stationary, then
$F(z_{i})=F(z_{*})=0$ and the next inequality holds
$$
F'(z_*) \frac{z_* - z_i}{\|z_* - z_i\|} = \frac{o(\|z_i -
z_*\|)}{\|z_i - z_*\|}.
$$
Vectors $\frac{z_* - z_i}{\|z_* - z_i\|}$ have unit length and
without loss of generality converge to a vector $u_*$, $\|
u_{*}\|=1$. Passing to the limit $i\to\infty$ we get
 $F'(z_*) u_* = 0$ for the unit vector $u_* \in \R^{n + m}$. The matrix
 $F'(z_{*})$ is
degenerate. A contradiction.
\end{proof}

\subsection{Proof of Lemma~\ref{lem:wEB-on-sphere}}
\label{sec:wEB-on-sphere-proof}

\begin{proof}
The eigenvalues of the symmetric matrix $A \in \R^{n \times n}$
are different. Hence there are  $n$ different unit eigenvectors.
In the basis from these vectors we have $f(x) = \sum_{j=1}^{n}
\lambda_k x_k^2$. There are  $2n$ stationary points $\pm e_j$.

Put $\mu = \min_{i \neq j} |\lambda_i - \lambda_j| > 0$.

For a point $x$ on the unit sphere define the nearest stationary
point (arbitrarily, if there are several of them). This nearest
point corresponding to the $k$th eigenvector (with particular
sign),which is chosen from the condition
$k_x = \arg\min_{i=1,...,n}\|x \pm e_i\|$. 
Further for a fixed point $x$ denote this number without index,
i.e. $k \doteq k_x$ and $e_{k_x}=e_k$. It is obviuos that the
point $x$ belongs to the spherical segment ("hat" of the sphere),
say
$$
S_{k} = \left\{x \in \R^n : \|x\| = 1, x_{k} \geq \frac{1}{\sqrt{2}} \right\},
$$
$k$ corresponds to the nearest eigenvector.

Reorder coordinates in such way that component $x_{k}$ will be
the last:  $x = [u, x_k]$, here $u = [x_1, ..., x_{k-1}, x_{k+1},
..., x_n] \in \R^{n-1}$. If the point $x$ belongs to the segment
$S_{k}$, then $k$th coordinate can be expressed through the rest
coordinates, i.e. through $u$. Denote $h(u) = f([u, \sqrt{1
- \|u\|^2}])$, $h(u)= f(x)$. Then
$$
h(u) = \sum_{i=1}^n (\lambda_i - \lambda_k) x_i^2, \;\;
\|h'(u)\|^2 = \sum_{i=1}^n (\lambda_i - \lambda_k)^2 x_i \geq 4
\mu^2 \|u\|^2,
$$
 $\|h'(u)\| \geq 2 \mu \|u\|$ and consequently
\begin{equation} \label{eq:u-upper-bound}
\|u\| \leq \frac{1}{2 \mu} \|h'(u)\|.
\end{equation}

Define the subspace $H_k = \{x \in \R^n : (x, e_k) = 0\}$ (i.e.
$x_k = 0$).
Note that  $u$ can be expressed as  $u = P_{H_k} x$. Then we can
estimate the value $\|u\|$ \emph{from below} via the distance
between $x$ and the nearest orth $e_{k_x}=e_k$. If $x \in S_k$
then the angle between the segments with endpoints $0$, $u$ and $e_k$, $x$ no
more than $\pi/4$, see Fig.
\ref{fig:connection-between-u-and-x_k}. Hence
\begin{equation} \label{eq:u-lower-bound}
\frac{\|x - e_{k}\|}{\sqrt{2}}
\leq \|u - 0\| = \|u\|.
\end{equation}
\begin{figure}[!h]
\centering
\includegraphics[width=17cm]{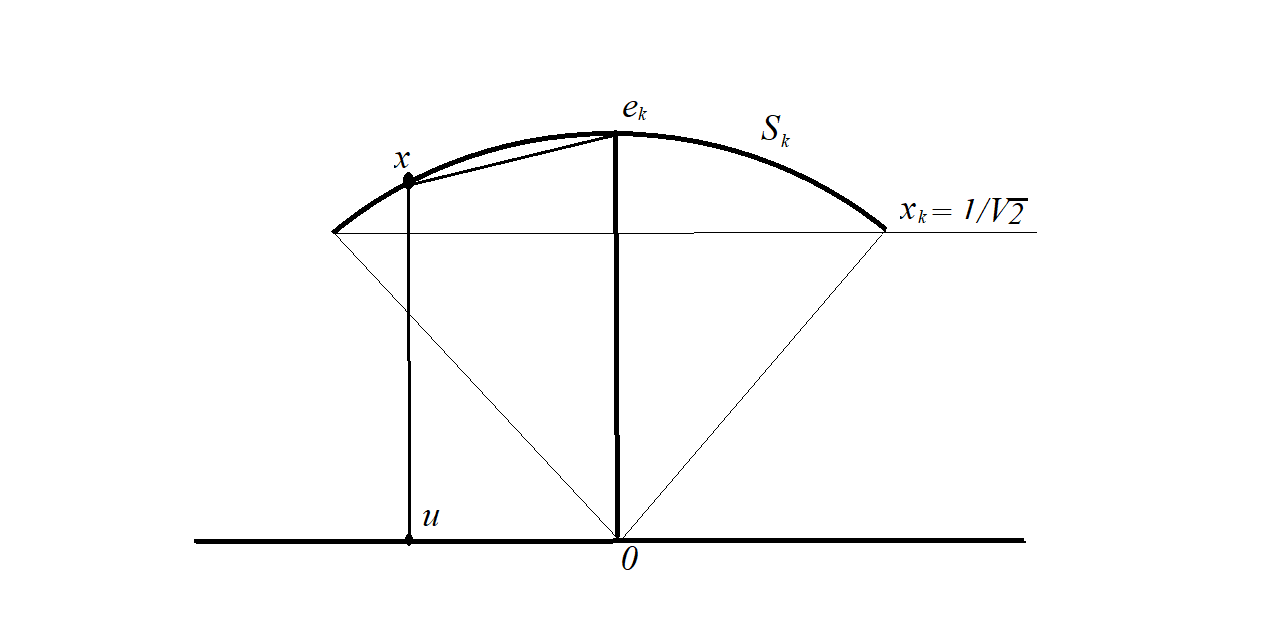}
\caption{Relationship between  $\|u\|$ and $\|x - e_k\|$.}
\label{fig:connection-between-u-and-x_k}
\end{figure}

Now we should estimate the norm of the vector $h'(u)$ through the
norm $\| P_{T_x} f'(x)\|$ from above. Define the function $\phi(u)
= [u, \sqrt{1 - \|u\|^2}]$, "restoring" a vector $x\in S_{k}$ by
its part $u$.

Firstly,  $h(u) = f(\phi(u))$, thus $h'(u) = f'(x) \phi'(u)$, here
$\phi'$ is the Jacobi matrix for the function $\phi(\cdot)$:
\begin{gather*}
\phi'(u) = \begin{bmatrix}
1 & 0 & \cdots & 0 & 0 \\
0 & 1 & \cdots & 0 & 0 \\
\vdots & \vdots & \ddots & \vdots & \vdots \\
0 & 0 & \cdots & 1 & 0 \\
0 & 0 & \cdots & 0 & 1 \\
\frac{- u_1}{\sqrt{1 - \|u\|^2}} &
\frac{- u_2}{\sqrt{1 - \|u\|^2}} &
\cdots &
\frac{- u_{n-2}}{\sqrt{1 - \|u\|^2}} &
\frac{- u_{n-1}}{\sqrt{1 - \|u\|^2}} &
\end{bmatrix} \\
=
\begin{bmatrix}
1 & 0 & \cdots & 0 & 0 \\
\vdots & \vdots & \ddots & \vdots & \vdots \\
0 & 0 & \cdots & 0 & 1 \\
\frac{- x_1}{\sqrt{1 - \|u\|^2}} &
\frac{- x_2}{\sqrt{1 - \|u\|^2}} &
\cdots &
\frac{- x_{n-1}}{\sqrt{1 - \|u\|^2}} &
\frac{- x_{n}}{\sqrt{1 - \|u\|^2}} &
\end{bmatrix}
\in \R^{n \times (n-1)}
\end{gather*}
(there is no column with $x_k$ in the last matrix). Let $\ell\in
\R^{m}$ be a unit vector. We shall estimate the norm of
$\phi'(u)$:
\begin{gather*}
\|\phi'(u) \ell \|^2 = \sum_{j=1}^{n-1} \ell_j^2 +
\Bigg(\sum_{\scriptsize \begin{matrix}i=1, n \\ i \neq
k\end{matrix}} \frac{x_i \ell_{i_+}}{\sqrt{1 - \|u\|^2}}\Bigg)^2
\leq 1 + \frac{\|u\|^2}{1 - \|u\|^2} \leq 2.
\end{gather*}
We use the inclusion  $x \in S_k$ in the last inequality that
means  $\|u\| \leq 1/\sqrt{2}$ and a technical index
\[
i_+ = \left\{
\begin{matrix}
i, & \text{ если } i < k,\\
i-1, & \text{ если } i > k.
\end{matrix}
\right.
\]
Finally we have $\|\phi'(u) \ell \| \leq \sqrt{2}$.

Substitute a unit vector $\ell(u)$ with $\|h'(u)\| = (h'(u),
\ell(u))$. Then
\begin{gather*}
\|h'(u)\|
= |(h'(u), \ell(u))|
= |(\phi'(u)^T f'(\phi(u)), \ell(u))|
= |(f'(\phi(u)), \phi'(u) \ell(u))| \\
= \|f'(\phi(u))\|\cdot\|\phi'(u) \ell(u)\|\cdot \cos \beta
\leq \sqrt{2} \|f'(x)\|\cdot \cos \beta,
\end{gather*}
here $\beta \leq \pi/2$ is the angle between \emph{directions} of
 vectors $f'(x)$ and $\phi'(u) \ell(u)$. Denote by $\gamma$ the
angle between $f'(x)$ and the tangent subspace  $T_x$. We have
$\phi'(u) \ell(u) \in T_x$, hence $\gamma \leq \beta$ and
\begin{equation} \label{eq:h_u-upper-bound}
\|h'(u)\|
\leq \sqrt{2} \|f'(x)\| \cos \beta
\leq \sqrt{2} \|f'(x)\| \cos \gamma
= \sqrt{2} \|P_{T_x} f'(x)\|.
\end{equation}
By inequalities \eqref{eq:u-upper-bound},
\eqref{eq:u-lower-bound} and \eqref{eq:h_u-upper-bound} we get
$$
\frac{\|x - e_{k}\|}{\sqrt{2}} \leq \|u\| \leq \frac{1}{2 \mu}
\|h'(u)\| \leq \frac{1}{\sqrt{2} \mu} \|P_{T_x} f'(x)\|.
$$
So, $\rho(x, \Omega) \leq \frac{1}{\mu} \|P_{T_x} f'(x)\|$.
\end{proof}

\subsection{Proof of Theorem~\ref{thm:wEB-for-nondegenerate}}
\label{sec:wEB-for-nondegenerate-proof}

In the present section a lower index means the number of the point
in a finite set, but not a number of iteration.

\begin{proof}
By
Lemma~\ref{lem:finite-number-of-stationary-points-in-nondegenerate}
 the set of stationary points in nondegenerate problem is finite
($\Omega = \{x_{j}\}_{j=1}^{J}$).

The set  $S$ is
compact $\mathcal{C}^2$-smooth manifold and $f\in \mathcal{C}^2$, hence the function
$$
S\ni x\to\lambda_x = - (g'(x) g'(x)^T)^{-1}g'(x)f'(x)
$$
is Lipschitz continuous
with some constant $L_\lambda$.

From the definition of stationary points  $F(x_{j},
\lambda_{x_{j}}) = 0$. Then from differentiability of  $F(z)$ by
the Taylor formula we have for any $j\in \{1,\dots,J\}$

\begin{gather*}
F_x(x) \doteq F(x, \lambda_x) = F(x, \lambda_x) - F(x_{j}, \lambda_{x_{j}})
= F'(x_{j}, \lambda_{x_{j}})
\begin{bmatrix}
x - x_{j} \\
\lambda_x - \lambda_{x_{j}}
\end{bmatrix}
+ o_j(\varrho),\ \varrho\to +0,
\end{gather*}
here $\varrho = \sqrt{\|x - x_{j}\|^2 + \|\lambda_x -
\lambda_{x_{j}}\|^2} \leq \|x - x_{j}\| \sqrt{1 + L_\lambda^2}$.

By the inverse operator theorem condition of non-degeneracy of
the problem $\|F'(x_{j}, \lambda_{x_{j}})^{-1}\| \leq \sigma_0$,
$j = 1,...,J$, is equivalent to the condition
$$
\|F'(x_{j}, \lambda_{x_{j}}) h \|
\geq \frac{1}{\sigma_0} \|h\|,
\;\; \forall h \in \R^{n + m}, \; j = 1, ..., J.
$$
Choose such a number $\ell>0$ that for all $j = 1,..., J$ and $h
\in \R^{n+m}, \;\|h\| \leq \ell$, the estimate $\|o_j(\|h\|)\|
\leq \frac{1}{2\sigma_0} \|h\|$ is fulfilled.

Fix $x \in S$,  $\rho(x, \Omega) \leq \frac{\ell}{\sqrt{1 +
L_\lambda^2}}$. Using the Taylor expansion with respect to the
\emph{nearest} point  $x_{j}\in\Omega$ at the point $x$ and taking
in mind that $\varrho \leq \|x - x_{j}\| \sqrt{1 + L_\lambda^2}
\leq \ell$ we have
\begin{gather*}
\|F_x(x)\|
\geq \left\|F'(x_{j}, \lambda_{x_{j}})
\begin{bmatrix}
x - x_{j} \\
\lambda_x - \lambda_{x_{j}}
\end{bmatrix}\,
\right\|
- \|o_j(\varrho)\|
\geq \frac{1}{\sigma_0} \varrho - \frac{1}{2\sigma_0} \varrho
= \frac{1}{2\sigma_0} \varrho \\
\geq \frac{1}{2\sigma_0} \|x - x_{j}\|
= \frac{1}{2\sigma_0} \rho(x, \Omega).
\end{gather*}
Thus the tEB condition holds
\begin{equation}\label{eq:In}
\|P_{T_x} f'(x)\| = \|F_x(x)\| \geq \mu \rho(x, \Omega),\quad
\forall x\in \cup_{j=1}^{J}\int B_{r}(x_{j}),
\end{equation}
where $\mu=1/(2\sigma_{0})$,
$r=\frac{\ell}{\sqrt{1+L_{\lambda}^{2}}}$.

The function $F_x(x)$ is continuous on the compact set
$S_{1}=\left\{x \in S : \rho(x, \Omega) \ge \frac{\ell}{\sqrt{1 +
L_\lambda^2}} \right\}$. From $S_{1}\cap\Omega=\emptyset$ we get
$\|F_x(x)\| > 0$ for all $x\in S_{1}$. By the Weierstrass theorem
there exists a number $b > 0$ such that $\|F_x(x)\| \geq b
> 0$ for all $x\in S_{1}$. Since  $\diam S = \sup_{x, y \in
S}\|x - y\| \geq \rho(x, \Omega)$, then
\begin{equation}\label{eq:Out}
\|F_x(x)\| \geq b =
\frac{b}{\diam S} \diam S \geq \frac{b}{\diam S} \rho(x,
\Omega),\;\; \forall x \in S : \rho(x, \Omega) >
\frac{\ell}{\sqrt{1 + L_\lambda^2}}.
\end{equation}
Combining inequalities (\ref{eq:In}) and (\ref{eq:Out}), we prove
the tEB condition on the set $S$:
$$
\|F_x(x)\|
\geq \min\Big\{\frac{1}{2 \sigma_0}, \frac{b}{\diam S} \Big\} \rho(x, \Omega), \;\;
\forall x \in S.
$$
\end{proof}

\subsection{Proof of Lemma~\ref{lem:sigma-around-stationary}}
\label{sec:sigma-around-stationary-proof}

The proof is based on the estimate $\|(I + X)^{-1}\| \leq 1 / (1 -
\|X\|)$,  $\|X\| < 1$.
\begin{proof}
Suppose that a point $x$ is at a distance no more than $r = \beta
/ (\sigma_0 L_{1,Fx})$ from some stationary point $x_*$, say, the
nearest. Denote
$$
\Delta F' = \Delta F' (x) = F'(x, \lambda_x) - F'(x_*, \lambda_{x_*}).
$$
Fix $\beta\in (0,1)$. We have $\|F'(x, \lambda_x)^{-1}\|=$
\begin{gather*}
= \|(F'(x_*, \lambda_{x_*}) + \Delta F')^{-1}\|
\leq \|F'(x_*, \lambda_{x_*})^{-1}\|\cdot \left\| \left(I+F'(x_*, \lambda_{x_*})^{-1}\Delta F'\right)^{-1}\right\|
\le \\
\le \frac{\sigma_0}{1-\| F'(x_*, \lambda_{x_*})^{-1}\Delta F'\|}\le
\frac{\sigma_0}{1-\| F'(x_*, \lambda_{x_*})^{-1}\|\cdot \|\Delta F'\|}\le
\frac{\sigma_0}{1-\sigma_0 \|\Delta F'\|}.
\end{gather*}
Taking into account the estimate $\|\Delta F'\|\le L_{1,Fx}\|
x-x_*\|\le L_{1,Fx}r\le \frac{\beta}{\sigma_0}$ and the last
formula we obtain the statement of Lemma.
\end{proof}

\subsection{Proof of Theorem~\ref{thm:convergence-to-stationary}}
\label{sec:main-algorithm-convergence-proof} Recall the
expression for constant $C$:
$$C = \min\left\{\frac{\mu \beta}{L_{1,Fx}
\sigma_0}, \; \frac{(1 - \beta)^2}{4 L_{1,F} \sigma_0^2}
\right\}.$$

\begin{proof}

By Corollary \ref{cor:projected-gradient-number-of-steps} the GPA
in no
more than  $N_1(C) = \left\lceil \frac{2 \Delta f (1 + \gamma L_1)^2}%
{C^2 \gamma (1 - \gamma L_1)}%
\right\rceil$ steps will achieve a point  $\widehat{x}\in S$,
where condition \eqref{eq:switching-condition} (and
\eqref{eq:switching-condition-on-delta-x}) holds. Moreover, some
conditions will be met simultaneously.

Firstly, by $\|P_{T_{\widehat{x}}} f'(\widehat{x})\| \leq
\frac{\mu \beta}{\sigma_0L_{1,Fx}}$ and the tEB  condition we
obtain that the point $\widehat{x}$ is close to some stationary
point, i.e. $\rho(\widehat{x}, \Omega) \leq \frac{\|P_{T_x}
f'(\widehat{x})\|}{\mu} \leq \frac{\beta}{\sigma_0L_{1,Fx}}$.
From condition of Theorem  $\beta \leq L_{1,Fx} \sigma_0 d$ it
follows that $\rho(\widehat{x}, \Omega)<d$ and condition of
Lemma~\ref{lem:sigma-around-stationary} is fulfilled. Thus we
have the estimate $\|
F'(\widehat{x},\lambda_{\widehat{x}})^{-1}\|\le
\frac{\sigma_0}{1-\beta}$.

Secondly, by virtue of the choice of constant $C$
$$
\|F(\widehat{x}, \lambda_{\widehat{x}})\|
= \|P_{T_{\widehat{x}}} f'(\widehat{x})\|
\leq \frac{(1 - \beta)^2}{4 L_{1,F} \sigma_0^2}
\leq \frac{1}{4 L_{1, F} \|F'(\widehat{x},
\lambda_{\widehat{x}})^{-1}\|^2}
$$
and from (\ref{bsl}) we get
\begin{equation}\label{ball-r}
 2\|
F'(\widehat{x},\lambda_{\widehat{x}})^{-1}F(\widehat{x},\lambda_{\widehat{x}})\|\le
2\frac{\sigma_{0}}{1-\beta}\frac{(1-\beta)^{2}}{4L_{1,F}\sigma_{0}^{2}}
= \frac{1-\beta}{2L_{1,F}\sigma_{0}}\le r.
\end{equation}
Let $x_{j}\in \Omega $ be a nearest point to the point
$\widehat{x}$. From the equalities $d\sqrt{1+L_{\lambda}^{2}}=r$,
$\rho (\widehat{x},\Omega)=\| \widehat{x} - x_{j}\|<d$ and
Lipschitz condition for $\lambda_{x}$ we have
$$
\rho ([\widehat{x},\lambda_{\widehat{x}}],\Sigma)\le\|
[\widehat{x},\lambda_{\widehat{x}}] - [x_{j},\lambda_{x_{j}}]\|
\le \rho (\widehat{x},\Omega)\sqrt{1+L^{2}_{\lambda}}\le r
$$
 i.e. $F'(z)$ is
Lipschitz with constant $L_{1,F}$ on the ball
$B_{r}([\widehat{x},\lambda_{\widehat{x}}])$.

From the Lipschitz property for $F'(z)$ on the ball
$B_{r}([\widehat{x},\lambda_{\widehat{x}}])$ and formula
(\ref{ball-r}) by Proposition~\ref{prop:mod-newton-convergence}
the sequence $\widehat{x}_{k}$ converges  to a stationary point
$x_*\in\Omega$ with rate
$$
\|\widehat{x}_k - x_*\| \leq 2^{1-k} \|
F'(\widehat{x},\lambda_{\widehat{x}})^{-1}\| C \leq 2^{1-k}
\frac{C \sigma_0}{1-\beta}.
$$
For an arbitrary $\varepsilon > 0$ the number of steps of the
modified NM for the accuracy  $\rho(\widehat{x}_k, \Omega) \leq
\varepsilon$ can be estimated as follows
$$
N_2(\varepsilon)
= \Bigg\lceil \log_2 \Big(\frac{C \sigma_0}{\varepsilon (1 - \beta)} \Big) \Bigg\rceil + 1.
$$

\end{proof}


\begin{thebibliography}{99}

\bibitem{zabotin-chernyaev2001}
Yu. A. Chernyaev, An extension of the gradient projection method
and Newton's method to extremum problems constrained by a smooth
surface, Computational Mathematics and Mathematical Physics,
2015, 55:9, p.~1451-1460.

\bibitem{bpt}
M. Balashov, B. Polyak, A. Tremba, Gradient projection and
conditional gradient methods for constrained nonconvex
minimization. (2019) arXiv:1906.11580


\bibitem{Vial}
J.-P. Vial, Strong and Weak Convexity of Sets and Functions.
Mathematics of Operations Research, 1983, 8:2, pp.~231--259.

\bibitem{Clarke} F.\,H. Clarke, R.\,J. Stern, P.\,R. Wolenski, Proximal
smoothness and lower--$C^{2}$ property, J. Convex Anal.,
 2:1-2 (1995), 117--144.

\bibitem{karimi-etal2016}
H. Karimi, J. Nutini, M. Schmidt, Linear convergence of gradient
and proximal-gradient methods under the Polyak-Lojasiewicz
condition. In: Frasconi P., Landwehr N., Manco G., Vreeken J.
(eds) Machine Learning and Knowledge Discovery in Databases.
Lecture Notes in Computer Science, vol 9851. Springer, 2016.

\bibitem{drusvyatskiy-lewis2016}
D. Drusvyatskiy, A.S. Lewis, Error bounds, quadratic growth, and
linear convergence of proximal methods, (2016), arXiv:1602.06661

\bibitem{gao-etal2016}
B. Gao, X. Liu, X. Chen, Ya. Yuan, On the Lojasiewicz exponent of
the quadratic sphere constrained optimization problem,
(2016), arXiv:1611.08781v2

\bibitem{liu-etal2015}
H. Liu, W. Wu, A. M.-Ch. So, Quadratic Optimization with
Orthogonality Constraints: Explicit Lojasiewicz Exponent and
Linear Convergence of Line-Search Methods. (2015),
arXiv:1510.01025

\bibitem{absil-etal2008}
P.-A. Absil, R. Mahony, R. Sepulchre, Optimization Algorithms on
Matrix Manifolds, Princeton University Press, Princeton and
Oxford, 2008.

\bibitem{Aubin}
J.-P. Aubin, I. Ekeland, Applied Nonlinear Analysis. Wiley, 1984.

\bibitem{Bertsekas} D.P. Bertsekas, Nonlinear programming, Massachusetts, Athena Scientific, 2nd ed.,
1999. 777p.

\bibitem{Polyak} B. T. Polyak, Introduction to optimization, NY, Translation series of mathematics and engineering,
1987.

\bibitem{poliquin-etal2000}
R. A. Poliquin, R. T. Rockafellar, L. Thibault, Local
Differentiability of Distance Functions. Transactions of the
American Mathematical Society, 352:11 (Nov., 2000),
pp.~5231--5249.

\bibitem{Thibault} M. Bounkhel and L. Thibault, On various notions of regularity
of sets in nonsmooth analysis, Nonlin. Anal. 48 (2002), 223-246.

\bibitem{bookVol2} {M. V. Balashov} {Nonconvex optimization.} ---
In book: Control theory (additional chapters): Tutorial / Ed. D.
A. Novikov. Ч M.: Leland, 2019. Ч 552 p. In russian.

\bibitem{kolmogorov-fomin2004}
Kolmogorov A. N., Fomin S. V. Elements of the function theory and
functional analysis. -- 7th ed. -- M.: Fizmatlit, 2004. In
russian.

\bibitem{Nesterov}
Yu. Nesterov,
Lectures on Convex Optimization, Springer, 2018.

\bibitem{absil-malick2012}
P.-A. Absil, J. Malick, Projection-like retractions on matrix
manifolds. SIAM Journal on Optimization, Society for Industrial
and Applied Mathematics, 2012, 22:1, pp.~135-158.

\bibitem{MSbReview} M. V. Balashov. The gradient projection algorithm for a proximally smooth set
and a function with Lipschitz continuous gradient. Sbornik:
Mathematics, N 6, 2020. In press.

\end{thebibliography}
\end{document}